\documentclass[12pt,a4paper]{amsart}
\usepackage[utf8]{inputenc}
\usepackage[T1]{fontenc}
\usepackage{amsmath,amssymb,amsthm}
\usepackage{amsaddr}
\usepackage{xcolor}

\usepackage{placeins}

\usepackage[ocgcolorlinks, linkcolor=red]{hyperref}
\usepackage[nocompress]{cite} 

\usepackage[normalem]{ulem}

\usepackage{mathtools}
\mathtoolsset{showonlyrefs=true}

\newtheorem{definition}{Definition}[section]
\newtheorem{theorem}{Theorem}[section]
\newtheorem{proposition}{Proposition}[section]
\newtheorem{lemma}{Lemma}[section]
\newtheorem{remark}{Remark}[section]

\newcommand{\R}{\mathbb{R}}
\newcommand{\N}{\mathbb{N}}
\newcommand{\F}{\mathcal{F}}

\renewcommand{\S}{\mathcal{S}}
\newcommand{\floor}[1]{\lfloor #1\rfloor}

\usepackage{enumerate}

\title[Optimal spectral differentiation]{Optimal stability of regularized spectral differentiation in Sobolev spaces}

\author{Teemu Tyni}
\address{Research Unit of Applied and Computational Mathematics,\\
University of Oulu, Finland}
\email{teemu.tyni@oulu.fi}

\begin{document}

\begin{abstract}
We study the problem of stable spectral differentiation of functions in Sobolev spaces from noisy data. We introduce a class of admissible Fourier multipliers under simple and directly verifiable conditions, and show that the corresponding regularized differentiation operators achieve minimax optimal stability rates. The results extend the previous $L^2$ based results to Sobolev spaces $H^{s,p}$, $1<p<\infty$. The analysis relies on multiplier estimates and applies to a wide class of multipliers, including Gaussian, spectral cutoff, and Tikhonov-type regularizations. Numerical examples demonstrate the behavior of several admissible spectral multipliers.
\end{abstract}

\maketitle

\section{Introduction}

In practical applications, one often seeks to compute derivatives of functions from noisy measurements. Direct differentiation is unstable and a common approach is to regularize differentiation in the Fourier domain.
To motivate the problem, consider a smooth enough function $f:\R\to\mathbb{C}$  and let
\[
\widehat f(\xi) = \frac{1}{\sqrt{2\pi}}\int_{-\infty}^\infty f(x)e^{-ix\cdot\xi}dx,
\]
be its Fourier transform. Then it is a straightforward application of integration by parts to verify that
\[
\widehat{f^{(k)}}(\xi) = (i\xi)^k \widehat f(\xi).
\]
However, in practice, one measures noisy data $f_\delta$ instead of $f$ and the possible high-frequency noise will blow up when multiplied by the factor $(i\xi)^k$. A natural approach is to dampen high frequencies with a cutoff multiplier $\chi_R$, which equals $1$ when $|\xi|\leq R$ and zero otherwise. This produces an approximation
\[
\widehat{f^{(k)}_{R,\delta}}(\xi) = (i\xi)^k \chi_R(\xi)\widehat f_\delta(\xi)
\]
of $\widehat{f^{(k)}}(\xi)$. Such a procedure was considered in \cite{qian_fourier_2006}, where it was proven that
\[
\Vert f^{(k)}-f^{(k)}_{R,\delta}\Vert_{L^2}\leq CE^{k/s}\delta^{1-k/s},
\]
when the function $f$ belongs to an $L^2$ based Sobolev space $H^{s,2}(\R)$ with $0\leq k < s$ and $\Vert f \Vert_{H^{s,2}}\leq E$, and the noisy measurements $f_\delta\in L^2(\R)$ satisfy $\Vert f - f_\delta\Vert_{L^2}<\delta$.
In the present work, we construct a general family of Fourier multipliers which produces the minimax optimal convergence rates
\begin{equation*}
\inf_{\substack{\mathcal A :L^p\to L^p\\\mathcal{A}(0)=0}}
\sup_{\substack{
    \|f\|_{H^{s,p}} \le E \\
    \|f - f_\delta\|_{L^p} \le \delta
}}
\| \partial^\alpha f - \mathcal A f_\delta \|_{L^p}
 \asymp E^{|\alpha|/s} \delta^{1-|\alpha|/s}
\end{equation*}
with respect to the noise level $\delta$ and the Sobolev norm $\Vert f \Vert_{H^{s,p}}\leq E$ for $1<p<\infty$ and $0\leq|\alpha|<s$.

There are many other strategies to tackle the numerical differentiation of noisy data.
A classical way to regularization of differentiation on a \emph{bounded} interval $[0,1]$ is to view it as an inverse problem. By the fundamental theorem of calculus $f(t) = f(0) + \int_0^t f'(s)\,ds$, so setting $x=f'$, $y=f(t)-f(0)$, and $Ax = \int_0^t x(s)ds$, one obtains a classical linear inverse problem of the form $Ax=y$, see~\cite{Cullum_numerical_differentiation,Chartrand}. However, this formulation does not extend directly to unbounded domains or higher dimensions.
Although regularized differentiation has been studied extensively, many existing analyses focus on one-dimensional settings, specific filter choices, Hilbert-space formulations, or particular applications. Spectral damping is considered in \cite{elden_wavelet_2000, qian_fourier_2006,yang_generalized_2014}, where simple truncation and Tikhonov mollification are used. Direct approaches considering differentiation as an inverse problem were considered for smooth data using Tikhonov methods in \cite{Wang2002,Cullum_numerical_differentiation} and for non-smooth functions using total variation regularization in \cite{Chartrand}, and using optimization methods in \cite{knowles_variational_diff,Hanke2001}. Moreover, some existing general spectral regularization theories such as \cite{Herdman2009} imply analogous conclusions in more abstract Hilbert space settings. Banach-space extensions based on source conditions and Bregman distance convergence rates have also been developed, for example, in \cite{EnglBook,Hofmann2007,SchusterBook}. These abstract frameworks apply to broad classes of inverse problems but generally do not provide directly verifiable conditions for particular classes of regularization operators. For an overview of different methods, we refer to \cite{knowles_methods_2014, smirnova} and the references therein. Recently, related regularization methods using spectral and Fourier multiplier techniques have also appeared in \cite{Rozendaal,Mathe,Benning_Burger_2018,Guastavino}.

The need for regularization in numerical differentiation stems from the fact that differentiation is an ill-posed problem in the sense of Hadamard~\cite{Hadamard}: small errors in the measurement are greatly amplified by differentiation. Along with the classical Hadamard definition, several further classifications of ill-posed inverse problems have been developed, including Nashed's distinction between ill-posedness of types I and II, and subsequent extensions to Banach spaces \cite{Nashed1987,Veselic}, providing additional context for regularization of differentiation. In the present work, we concentrate on explicit Fourier multiplier analysis and the stability estimates this approach provides.

The main contribution of this work is to identify simple verifiable conditions on Fourier multipliers that guarantee minimax optimal differentiation rates in the Sobolev spaces $H^{s,p}(\mathbb{R}^n)$. As a consequence, the resulting theory extends previously known one-dimensional $L^2$-Hilbert space based results to arbitrary dimensions, unbounded domains, and general (Banach) $L^p$-based Sobolev spaces, providing also a theoretically justified method for choosing the regularization parameter.
In contrast to abstract settings, the admissibility conditions here are stated directly in terms of Fourier multipliers and can be verified explicitly for commonly used filters.

In Section~\ref{sec: Stability} we establish the corresponding stability estimates, while Section~\ref{sec: Optimality} shows that these rates are minimax optimal. Section~\ref{sec: Examples} demonstrates that the framework includes sharp spectral truncation, smooth cutoff and Gaussian filters, generalized Tikhonov regularization, among other common strategies, finishing with brief numerical examples.

\section{Regularized spectral differentiation}\label{sec: Stability}

Let us introduce the notation for Fourier transform and Sobolev spaces. The Fourier transform is defined first on the Schwartz class $\S(\R^n)$ of smooth rapidly decaying functions by
\[
\widehat f(\xi) = \F(f)(\xi) \vcentcolon=
\frac{1}{(2\pi)^\frac{n}{2}}
\int_{\R^n} f(x) e^{-ix\cdot\xi} dx.
\]
With this normalization, the Fourier transform extends to a unitary operator on $L^2(\R^n)$ (Plancherel's theorem). Moreover, the Fourier transform  extends to a continuous automorphism on tempered distributions $\S'(\R^n)$ via duality~\cite[Ch.~7]{Hormander_book}. We denote by $\F^{-1}$ the inverse Fourier transform.  

The fractional Sobolev spaces $H^{s,p}(\R^n)$ for $s\in\R$ and $1\leq p\leq \infty$ are defined in the usual way as the Bessel potential spaces
\[
H^{s,p}(\R^n)\vcentcolon=
\{
f\in \S'(\R^n) \mid \F^{-1}( (1+|\xi|^2)^{s/2} \widehat f) \in L^p(\R^n)\}
\]
and they come equipped with the norm
\[
\Vert f \Vert_{H^{s,p}} \vcentcolon = \Vert \F^{-1}( (1+|\xi|^2)^{s/2} \widehat f) \Vert_{L^p} = \Vert (1-\Delta)^{s/2}f\Vert_{L^p}.
\]
For theory of Sobolev spaces, we refer to \cite{GrafakosClassical,  stein_introToFourier_1971,AdamsFournier}. 

Let $\alpha\in\N^n$ be a multi-index. Throughout, the symbol
\[
\partial^\alpha \vcentcolon= \frac{\partial^{|\alpha|}}{\partial x_1^{\alpha_1}\cdots \partial x_n^{\alpha_n}}
\]
denotes the usual partial derivatives in multi-index notation, where $|\alpha|=\alpha_1+\ldots+\alpha_n$. The symbol $C>0$ is used to denote a generic constant whose value can change from line to line, when it is not important to keep track of its precise value.

Let us start by defining an admissible symbol class for regularized spectral differentiation. We require two conditions. The first is a  Mikhlin condition, which guarantees that the multipliers map $L^p$ to itself. The second condition is an approximation property at low frequencies.

\begin{definition}[Admissible spectral multipliers]\label{def: admissible multiplier}
Let  $1< p<\infty$, $s>0$, and $\alpha\in \mathbb N^n$ be a multi-index with $0\leq |\alpha|<s$.
A family of Fourier multipliers $\{ m_{R,\alpha}(\xi) \}_{R>0}\subset L^\infty(\R^n)$ is called \emph{admissible of order $\alpha$ on $H^{s,p}$} if the following holds:
\begin{enumerate}[(a)]
    \item\label{def: Banach case} Case $p\neq 2$. For all $R>0$ the functions $m_{R,\alpha}\in C^\infty(\R^n\setminus\{0\})$ and there exists constants $c_\beta,c_\beta'>0$ independent of $R$ such that: 
\begin{equation}\label{def: Mikhlin condition}
    |\xi|^{|\alpha|} |\partial^\beta m_{R,\alpha}(\xi)| \leq c_\beta R^{|\alpha|}|\xi|^{-|\beta|}
\end{equation}
\begin{equation}\label{def: approximation condition}
    |\partial^\beta(1-m_{R,\alpha}(\xi))|\leq c_\beta' \min\{1,(|\xi|/R)^{s-|\alpha|}\}|\xi|^{-|\beta|}
\end{equation}
for all $\xi\in\R^n\setminus\{0\}$ and multi-indices $|\beta|\leq \floor{n/2}+1$.
\item\label{def: Hilbert case} Case $p=2$. It is enough that $m_{R,\alpha}\in L^\infty(\R^n)$ for all $R>0$ and the conditions \eqref{def: Mikhlin condition}-\eqref{def: approximation condition} hold with $\beta=0$.
\end{enumerate}
\end{definition}
Note that the assumptions in (\ref{def: Banach case}) of Definition~\ref{def: admissible multiplier} are strictly stronger than (\ref{def: Hilbert case}). The distinction between the two cases is due to the fact that for $p=2$ boundedness of Fourier multipliers follows directly from Plancherel's theorem, whereas for $p\neq2$ we will use the Mikhlin multiplier theorem.
\begin{definition}[Regularized differentiation operator]\label{def: Regularized differentiation}
Let $f\in \mathcal{S}'(\R^n)$ and $\alpha\in\N_0^n$. Let $m_{R,\alpha}$  be an admissible multiplier of order $\alpha$ on $H^{s,p}$ and define the \emph{regularized derivative} of order $\alpha$ as the Fourier multiplier operator
\begin{equation}\label{def: D_R}
    \widehat{D_R^{\alpha} f}(\xi) := (i\xi)^\alpha m_{R,\alpha}(\xi) \widehat f(\xi), \qquad \xi\in\R^n.    
\end{equation}

\end{definition}

Before discussing the main result, let us state a multiplier estimate.
\begin{lemma}\label{lemma: Mikhlin multipliers}
Let $s>0$ and let $\alpha\in\mathbb N^n$ satisfy $|\alpha|<s$.
Let $m_{R,\alpha}$ be admissible of order $\alpha$ on $H^{s,p}(\R^n)$ with $p\neq 2$. Define
\[
M_{R,\alpha}(\xi)
\vcentcolon=
(i\xi)^\alpha (1-m_{R,\alpha}(\xi))(1+|\xi|^2)^{-s/2}
\]
and
\[
n_{R,\alpha}(\xi)
\vcentcolon=
(i\xi)^\alpha m_{R,\alpha}(\xi).
\]
Then for every multi-index $\beta$ with  $|\beta|\leq \floor{n/2}+1$ there exist constants $C_\beta,C_\beta'>0$, independent of $R$, such that
\[
|\partial^\beta M_{R,\alpha}(\xi)|
\le
C_\beta R^{|\alpha|-s}|\xi|^{-|\beta|},
\]
and
\[
|\partial^\beta n_{R,\alpha}(\xi)|
\le
C_\beta' R^{|\alpha|}|\xi|^{-|\beta|},
\]
for all $\xi\in\R^n\setminus\{0\}$. 

In particular, the regularized differentiation operator \eqref{def: D_R} defines a bounded linear map $D_R^\alpha:L^p(\R^n)\to L^p(\R^n)$ with operator norm bounded by $\Vert D_R^{\alpha}\Vert_{L^p\to L^p}\leq C_\beta'R^{|\alpha|}$.
\end{lemma}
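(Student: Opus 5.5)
The plan is to prove both symbol estimates by the Leibniz rule, combining the two admissibility conditions \eqref{def: Mikhlin condition}--\eqref{def: approximation condition} with elementary bounds on the monomial $(i\xi)^\alpha$ and the Bessel weight $(1+|\xi|^2)^{-s/2}$. The $L^p$ boundedness then follows from the Mikhlin multiplier theorem.

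For $n_{R,\alpha}$ I write
\[
\partial^\beta n_{R,\alpha}=\sum_{\gamma\le\beta}\binom{\beta}{\gamma}\partial^{\gamma}(i\xi)^\alpha\,\partial^{\beta-\gamma}m_{R,\alpha}.
\]
Here $\partial^\gamma(i\xi)^\alpha$ vanishes unless $\gamma\le\alpha$, and then it is bounded by $C|\xi|^{|\alpha|-|\gamma|}$. By \eqref{def: Mikhlin condition}, $|\partial^{\beta-\gamma}m_{R,\alpha}|\le c\,R^{|\alpha|}|\xi|^{-|\alpha|-|\beta|+|\gamma|}$. Multiplying, every term is bounded by $CR^{|\alpha|}|\xi|^{-|\beta|}$, which is the claim. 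The only care needed is that \eqref{def: Mikhlin condition} is used with the full factor $|\xi|^{|\alpha|}$ on the left for every order of derivative.

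For $M_{R,\alpha}$ I apply Leibniz to the triple product. The weight satisfies $|\partial^\eta(1+|\xi|^2)^{-s/2}|\le C(1+|\xi|^2)^{-s/2}|\xi|^{-|\eta|}$; in fact it decays like $(1+|\xi|)^{-s-|\eta|}$, which is stronger. The factor $1-m_{R,\alpha}$ is handled by \eqref{def: approximation condition}. A typical term is then bounded by
\[
C\,|\xi|^{|\alpha|}\min\{1,(|\xi|/R)^{s-|\alpha|}\}(1+|\xi|^2)^{-s/2}|\xi|^{-|\beta|}.
\]
It remains to show that $\sup_\xi |\xi|^{|\alpha|}\min\{1,(|\xi|/R)^{s-|\alpha|}\}(1+|\xi|^2)^{-s/2}\le R^{|\alpha|-s}$, and I split into two regimes.
\begin{itemize}
\item If $|\xi|\le R$, the expression is at most $|\xi|^{s}R^{|\alpha|-s}(1+|\xi|^2)^{-s/2}\le R^{|\alpha|-s}$.
\item If $|\xi|>R$, it is at most $|\xi|^{|\alpha|-s}\le R^{|\alpha|-s}$, because $|\alpha|<s$.
\end{itemize}
This bookkeeping is the main point of the lemma. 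The key observation is that the minimum in \eqref{def: approximation condition} is tuned so that both regimes give the same power of $R$.

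Finally, by the Mikhlin multiplier theorem for $1<p<\infty$, a symbol satisfying $|\partial^\beta n(\xi)|\le A|\xi|^{-|\beta|}$ for $|\beta|\le\floor{n/2}+1$ defines an operator bounded on $L^p$ with norm at most $C_{n,p}A$. Applying this with $A=C'R^{|\alpha|}$ gives $\|D_R^\alpha\|_{L^p\to L^p}\le CR^{|\alpha|}$, where the constant depends on $n$, $p$ and the $c_\beta$ but not on $R$. Since the Mikhlin constant is linear in $A$, the $R$-dependence passes through unchanged.
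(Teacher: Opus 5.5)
Your proof is correct and follows the paper's argument essentially step by step. You apply Leibniz' rule to the two- and three-factor products, use the monomial bound $|\partial^\gamma(i\xi)^\alpha|\le C|\xi|^{|\alpha|-|\gamma|}$ together with both admissibility conditions, split into the regimes $|\xi|\le R$ and $|\xi|>R$, and conclude with Mikhlin's theorem, whose constant is linear in the symbol bound. The only differences are cosmetic: you use the weaker (but valid) weight bound $C(1+|\xi|^2)^{-s/2}|\xi|^{-|\eta|}$, and you do the two-regime computation once for the common factor rather than term by term.
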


\begin{proof}
We use standard multiplier estimates.
Consider first
\[
M_{R,\alpha}(\xi)
=
(i\xi)^\alpha (1-m_{R,\alpha}(\xi))(1+|\xi|^2)^{-s/2}.
\]
By Leibniz' rule,
\[
\partial^\beta M_{R,\alpha}
=
\sum_{\beta_1+\beta_2+\beta_3=\beta}
C_{\beta_1,\beta_2,\beta_3}\,
\partial^{\beta_1}(i\xi)^\alpha\,
\partial^{\beta_2}(1-m_{R,\alpha})\,
\partial^{\beta_3}(1+|\xi|^2)^{-s/2}.
\]
Note that if $\beta_1 \nleq \alpha$ (component-wise), then $\partial^{\beta_1}(i\xi)^\alpha \equiv 0$, so we have the crude bound
\begin{equation}\label{eq: polynomials estimated}
|\partial^{\beta_1}(i\xi)^\alpha|
\leq
C_{\alpha,\beta_1}
|\xi|^{|\alpha|-|\beta_1|},\quad |\xi|\neq 0.
\end{equation}
Next, for every multi-index $\beta_3$,
\begin{equation}\label{eq: bracket estimated}
|\partial^{\beta_3}(1+|\xi|^2)^{-s/2}|
\le
C_{s,\beta_3}
(1+|\xi|)^{-s-|\beta_3|}.    
\end{equation}
Indeed, differentiation produces a finite sum of terms of the form
\[
P_{\beta_3}(\xi)(1+|\xi|^2)^{-s/2-|\beta_3|},
\]
where $P_{\beta_3}$ is a polynomial of degree at most $|\beta_3|$, which yields the stated bound \eqref{eq: bracket estimated}.
Finally, recall from \eqref{def: approximation condition} that
\[
|\partial^{\beta_2}(1-m_{R,\alpha})(\xi)|
\leq
c_{\beta_2}\min\{1,(|\xi|/R)^{s-|\alpha|}\}|\xi|^{-|\beta_2|}.
\]

Combining estimates \eqref{eq: polynomials estimated} and \eqref{eq: bracket estimated} with \eqref{def: approximation condition} yields two cases: 

\noindent
If $|\xi|\leq R$, then $\min\{1,(|\xi|/R)^{s-|\alpha|}\}= (|\xi|/R)^{s-|\alpha|}$ and 
\begin{align*}
&|\partial^{\beta_1}(i\xi)^\alpha|
\,|\partial^{\beta_2}(1-m_{R,\alpha}(\xi))|
\,|\partial^{\beta_3}(1+|\xi|^2)^{-s/2}|\\
&\quad\leq
C
|\xi|^{|\alpha|-|\beta_1|}
(|\xi|/R)^{s-|\alpha|}
|\xi|^{-|\beta_2|}(1+|\xi|)^{-s-|\beta_3|}\\
&\quad\leq
C R^{|\alpha|-s}
|\xi|^{|\alpha|-|\beta_1|}
|\xi|^{s-|\alpha|}
|\xi|^{-|\beta_2|}|\xi|^{-s-|\beta_3|}\\
&\quad\leq 
CR^{|\alpha|-s}
|\xi|^{-|\beta|}.
\end{align*}
\noindent
If $|\xi|> R$, then $\min\{1,(|\xi|/R)^{s-|\alpha|}\}\leq 1$ 
and $|\xi|^{|\alpha|-s}\leq R^{|\alpha|-s}$, so that
\begin{align*}
&|\partial^{\beta_1}(i\xi)^\alpha|
\,|\partial^{\beta_2}(1-m_{R,\alpha}(\xi))|
\,|\partial^{\beta_3}(1+|\xi|^2)^{-s/2}|\\
&\quad\leq
C
|\xi|^{|\alpha|-|\beta_1|}
|\xi|^{-|\beta_2|}|\xi|^{-s-|\beta_3|}\\
&\quad=
C|\xi|^{|\alpha|-s-|\beta|}\\
&\quad
\leq 
C R^{|\alpha|-s}|\xi|^{-|\beta|}.
\end{align*}
Summing over all combinations $\beta=\beta_1+\beta_2+\beta_3$
yields
\[
|\partial^\beta M_{R,\alpha}(\xi)|
\le
C_\beta
R^{|\alpha|-s}
|\xi|^{-|\beta|}.
\]

We now turn to
\[
n_{R,\alpha}(\xi)
=
(i\xi)^\alpha m_{R,\alpha}(\xi).
\]
Leibniz' rule gives
\[
\partial^\beta n_{R,\alpha}(\xi)
=
\sum_{\beta_1+\beta_2=\beta}
C_{\beta_1,\beta_2}\,
\partial^{\beta_1}(i\xi)^\alpha
\partial^{\beta_2}m_{R,\alpha}(\xi).
\]
Now it follows directly from \eqref{def: Mikhlin condition} that
\[
|\partial^{\beta_2} m_{R,\alpha}(\xi)| \leq
C R^{|\alpha|}|\xi|^{-|\alpha|-|\beta_2|}.
\]
Therefore
\begin{align*}
|\partial^{\beta_1}(i\xi)^\alpha
\partial^{\beta_2}m_{R,\alpha}|
\leq
C |\xi|^{|\alpha|-|\beta_1|}R^{|\alpha|}|\xi|^{-|\alpha|-|\beta_2|}
=
CR^{|\alpha|}|\xi|^{-|\beta|}.
\end{align*}
Summing again over all decompositions of $\beta=\beta_1+\beta_2$ gives the claim.

The regularized differentiation operator \eqref{def: D_R} arises from the multiplier $n_{R,\alpha}$, and by Mikhlin's multiplier theorem (see~\cite[Theorem~6.2.7]{GrafakosClassical} or \cite[Section~3.2, Th.~2]{salo_function_spaces}) is therefore a bounded linear operator $L^p(\R^n)\to L^p(\R^n)$ with operator norm $\Vert D_R^{\alpha} \Vert_{L^p\to L^p}\leq C R^{|\alpha|}$.
\end{proof}

Next we prove the main result of this paper, namely, the stability estimates for spectral differentiation in Sobolev spaces. In the $L^2$ setting, related stability estimates were obtained in joint work with Anttila and Harju~\cite{AnttilaHarjuTyni2025} (submitted) in the context of an inverse problem for nonlinear wave equations.
The proof of the following theorem is based on the multiplier estimates of Lemma~\ref{lemma: Mikhlin multipliers} and the application of the Mikhlin multiplier theorem, see~\cite[Theorem~6.2.7]{GrafakosClassical} or \cite[Theorem~8.10 and Corollary~8.11]{Duoandikoetxea}; see also the original works of Mikhlin~\cite{Mikhlin} and H\"ormander~\cite{Hormander}.  Heuristically, the idea is to split the difference $\partial^\alpha f - D_R^\alpha f_\delta$ into two parts: the approximation error $\partial^\alpha f - D_R^\alpha f$ between the exact and regularized derivatives, estimated by the approximation property \eqref{def: approximation condition}, and the noise term $D_R^\alpha(f-f_\delta)$, which is controlled by the $L^p$-multiplier properties of $m_{R,\alpha}$ given by \eqref{def: Mikhlin condition}.
\begin{theorem}[Stability of spectral differentiation in $H^{s,p}$]
\label{thm:stability}
Let  $1<p<\infty$, $s>0$, and $\alpha\in\N_0^n$ with $0\leq |\alpha|<s$. Let $E,\delta>0$ and assume $f\in H^{s,p}(\R^n)$ with $\Vert f \Vert_{H^{s,p}}\leq E$ and that noisy measurements satisfy $f_\delta\in L^p(\R^n)$ with
\[
 \Vert f-f_\delta \Vert_{L^p} \leq \delta.
\]
Let $m_{R,\alpha}$ be admissible of order $\alpha$ on $H^{s,p}(\R^n)$. Then
\[
\Vert\partial^\alpha f - D_R^{\alpha} f_\delta \Vert_{L^p} \leq C \Big( R^{|\alpha|-s} E + R^{|\alpha|} \delta \Big),
\]
where $C>0$ depends only on $\alpha,s,n$ (and multiplier constants). Choosing
\begin{equation}\label{eq: regularization parameter}
R = \Big( \frac{E}{\delta} \Big)^{1/s}    
\end{equation}
yields the bound
\[
\Vert\partial^\alpha f - D_R^{\alpha} f_\delta\Vert_{L^p} \leq C E^{|\alpha|/s} \delta^{1-|\alpha|/s}.
\]
\end{theorem}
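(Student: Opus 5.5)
We split the error by the triangle inequality into an approximation term and a noise term:
\[
\Vert \partial^\alpha f - D_R^\alpha f_\delta\Vert_{L^p} \le \Vert \partial^\alpha f - D_R^\alpha f\Vert_{L^p} + \Vert D_R^\alpha (f-f_\delta)\Vert_{L^p}.
\]
The noise term is handled directly by Lemma~\ref{lemma: Mikhlin multipliers}. For $p\neq 2$, $D_R^\alpha$ is bounded on $L^p$ with norm at most $C R^{|\alpha|}$, so this term is at most $CR^{|\alpha|}\delta$. For $p=2$ we use Plancherel together with the $\beta=0$ case of \eqref{def: Mikhlin condition}. This gives $|(i\xi)^\alpha m_{R,\alpha}(\xi)|\le |\xi|^{|\alpha|}|m_{R,\alpha}(\xi)|\le c_0R^{|\alpha|}$, hence the same bound.

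For the approximation term we write $g=(1-\Delta)^{s/2}f\in L^p$ with $\Vert g\Vert_{L^p}\le E$. On the Fourier side,
\[
\widehat{\partial^\alpha f - D_R^\alpha f}(\xi) = (i\xi)^\alpha(1-m_{R,\alpha}(\xi))\widehat f(\xi) = M_{R,\alpha}(\xi)\,\widehat g(\xi),
\]
where $M_{R,\alpha}$ is the symbol of Lemma~\ref{lemma: Mikhlin multipliers}. For $p\neq2$, the lemma gives $|\partial^\beta M_{R,\alpha}(\xi)|\le C_\beta R^{|\alpha|-s}|\xi|^{-|\beta|}$ for $|\beta|\le\floor{n/2}+1$. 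We then apply the Mikhlin multiplier theorem to the rescaled symbol $R^{s-|\alpha|}M_{R,\alpha}$, whose Mikhlin constants are independent of $R$. This yields
\[
\Vert \partial^\alpha f - D_R^\alpha f\Vert_{L^p}\le CR^{|\alpha|-s}\Vert g\Vert_{L^p}\le CR^{|\alpha|-s}E .
\]
For $p=2$, Plancherel reduces the estimate to $\sup_\xi|M_{R,\alpha}(\xi)|\le CR^{|\alpha|-s}$. This is the same two-case computation ($|\xi|\le R$ and $|\xi|>R$) as in the lemma, run with $\beta=0$, and it uses only the $\beta=0$ case of \eqref{def: approximation condition}.

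The main point needing care is justifying the Fourier-side identities for $f\in H^{s,p}$, which is a priori only a tempered distribution. First, $|\alpha|<s$ gives $f\in H^{|\alpha|,p}\subset L^p$, so $\partial^\alpha f\in L^p$ and $D_R^\alpha f$ is well defined. Second, the identity $\partial^\alpha f - D_R^\alpha f = T_{M_{R,\alpha}} g$ must hold as distributions. We will verify it on $\S$, where all manipulations are legitimate, and extend by density of $\S$ in $H^{s,p}$. For this we use that both sides are bounded operators $H^{s,p}\to L^p$: $\partial^\alpha$ and $D_R^\alpha$ are bounded since $H^{s,p}\hookrightarrow H^{|\alpha|,p}\hookrightarrow L^p$, and $T_{M_{R,\alpha}}$ is bounded on $L^p$.

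Finally, adding the two bounds gives $C(R^{|\alpha|-s}E+R^{|\alpha|}\delta)$. With $R=(E/\delta)^{1/s}$ both terms equal $E^{|\alpha|/s}\delta^{1-|\alpha|/s}$, which yields the second estimate.
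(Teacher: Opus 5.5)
Your proposal is correct and follows essentially the same route as the paper. You use the same split into the approximation term $T_{M_{R,\alpha}}g$ with $g=(1-\Delta)^{s/2}f$ and the noise term $D_R^\alpha(f-f_\delta)$, bounding each via Lemma~\ref{lemma: Mikhlin multipliers} and the Mikhlin theorem for $p\neq 2$ and via Plancherel with the $\beta=0$ conditions for $p=2$; your density argument on $\S$ for the distributional identity and the rescaling of the symbol are harmless extras that the paper leaves implicit.
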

\begin{proof}
We decompose the error into approximation and noise terms as follows
\[
\partial^\alpha f - D_R^{\alpha} f_\delta
=
(\partial^\alpha f - D_R^{\alpha} f)
+
D_R^{\alpha}(f-f_\delta).
\]

Taking the Fourier transform of the first term gives
\[
\F(\partial^\alpha f - D_R^{\alpha} f)(\xi)
=
(i\xi)^\alpha (1-m_{R,\alpha}(\xi)) \widehat f(\xi).
\]
Denote
\[
g := (I-\Delta)^{s/2} f,
\qquad
\widehat g(\xi) = (1+|\xi|^2)^{s/2} \widehat f(\xi).
\]
Since $f\in H^{s,p}(\R^n)$ we have $g\in L^p(\R^n)$ and
\[
\|g\|_{L^p} = \|f\|_{H^{s,p}}.
\]

Write
\[
\widehat{\partial^\alpha f - D_R^{\alpha} f}
=
M_{R,\alpha}(\xi)\widehat g(\xi),
\]
where
\[
M_{R,\alpha}(\xi)
:=
(i\xi)^\alpha (1-m_{R,\alpha}(\xi))(1+|\xi|^2)^{-s/2}.
\]
Therefore
\[
\partial^\alpha f - D_R^{\alpha} f
=
T_{M_{R,\alpha}} g,
\]
where $T_{M_{R,\alpha}}$ is the Fourier multiplier operator with symbol $M_{R,\alpha}$.

We prove that
\[
\Vert T_{M_{R,\alpha}}g \Vert_{L^p(\R^n)} \leq 
C R^{|\alpha|-s} \Vert f\Vert_{H^{s,p}(\R^n)}.
\]
Suppose first that $p=2$. From \eqref{def: approximation condition} under Definition~\ref{def: admissible multiplier}~(\ref{def: Hilbert case}) we have
\[
|1-m_{R,\alpha}(\xi)|
\leq
C\min\{1,(|\xi|/R)^{s-|\alpha|}\}
\]
which implies for $|\xi|\leq R$ that
\begin{align*}
|M_{R,\alpha}(\xi)|
&= 
|\xi|^{|\alpha|}|1-m_{R,\alpha}(\xi)|(1+|\xi|^2)^{-s/2}\\
&\leq
C|\xi|^{|\alpha|}(|\xi|/R)^{s-|\alpha|}|\xi|^{-s}
\leq CR^{|\alpha|-s}.
\end{align*}
Similarly, if $|\xi|>R$ then
\[
|M_{R,\alpha}(\xi)|
\leq C|\xi|^{|\alpha|}|\xi|^{-s}
\leq C R^{|\alpha|-s}.
\]
By Plancherel's theorem, we immediately have
\begin{align*}
\Vert T_{M_{R,\alpha}}g \Vert_{L^2(\R^n)} 
&= 
\Vert M_{R,\alpha} \widehat g \Vert_{L^2(\R^n)}\\
&\leq
 \Vert M_{R,\alpha}\Vert_{L^\infty(\R^n)}\Vert g \Vert_{L^2(\R^n)}\\
&\leq 
C R^{|\alpha|-s} \Vert f\Vert_{H^{s,2}(\R^n)}.
\end{align*}

The non-trivial case is when $p\neq 2$. Recall from Lemma~\ref{lemma: Mikhlin multipliers} that
\[
|\partial^\beta M_{R,\alpha}(\xi)|
\le
C_\beta R^{|\alpha|-s}|\xi|^{-|\beta|},
\qquad
|\beta|\leq \lfloor n/2\rfloor+1.
\]
Therefore, $M_{R,\alpha}$ satisfies the classical Mikhlin condition 
so that the Mikhlin multiplier theorem implies
\[
\|T_{M_{R,\alpha}}\|_{L^p\to L^p}
\le
C R^{|\alpha|-s}.
\]
Applying this to $g$ gives
\[
\|\partial^\alpha f - D_R^{\alpha} f\|_{L^p}
=
\|T_{M_{R,\alpha}} g\|_{L^p}
\le
C R^{|\alpha|-s}\|g\|_{L^p}.
\]
Since $\|g\|_{L^p}=\|f\|_{H^{s,p}}$ we obtain
\begin{equation}\label{eq: bound for approximation term}
\|\partial^\alpha f - D_R^{\alpha} f\|_{L^p}
\le
C R^{|\alpha|-s} E.    
\end{equation}

Consider next the term $D_R^\alpha f-D_R^\alpha f_\delta$.
The operator $D_R^\alpha$ is a Fourier multiplier operator with the symbol
\[
n_{R,\alpha}(\xi)=(i\xi)^\alpha m_{R,\alpha}(\xi).
\]
If $p=2$ it follows directly from \eqref{def: Mikhlin condition} under Definition~\ref{def: admissible multiplier}~(\ref{def: Hilbert case}) that
\begin{equation}\label{eq: L2 bound}
|(i\xi)^\alpha m_{R,\alpha}(\xi)|
\leq
|\xi|^{|\alpha|} |m_{R,\alpha}(\xi)|
\leq
CR^{|\alpha|}.    
\end{equation}
On the other hand, if $p\neq 2$, Lemma~\ref{lemma: Mikhlin multipliers} shows that
\begin{equation}\label{eq: Lp bound}
 |\partial^\beta ((i\xi)^\alpha m_{R,\alpha}(\xi))|
\le
C_\beta R^{|\alpha|}|\xi|^{-|\beta|},
\qquad |\beta|\leq \floor{n/2}+1.   
\end{equation}
Now, if $p=2$ Plancherel's theorem with \eqref{eq: L2 bound} and if $p\neq 2$ Mikhlin's multiplier theorem (or just Lemma~\ref{lemma: Mikhlin multipliers}) with \eqref{eq: Lp bound} show that
\[
\|D_R^\alpha\|_{L^p\to L^p}
\le
C R^{|\alpha|}
\]
implying for $1<p<\infty$ that
\begin{equation}\label{eq: bound for noise term}
\|D_R^\alpha(f-f_\delta)\|_{L^p}
\le
C R^{|\alpha|}\|f-f_\delta\|_{L^p}
\le
C R^{|\alpha|}\delta.
\end{equation}

Combining the bounds \eqref{eq: bound for approximation term} and \eqref{eq: bound for noise term} gives
\[
\|\partial^\alpha f - D_R^\alpha f_\delta\|_{L^p}
\le
C\big(
R^{|\alpha|-s}E
+
R^{|\alpha|}\delta
\big).
\]
Finally, the choice $R=(E/\delta)^{1/s}$ produces
\[
\|\partial^\alpha f - D_R^\alpha f_\delta\|_{L^p}
\le
C E^{|\alpha|/s}\delta^{1-|\alpha|/s}
\]
as claimed.
\end{proof}

\begin{remark}\label{rem: Hilbert is simpler}
As seen in the proof of Theorem~\ref{thm:stability}, in the $L^2$ based case the proofs are much shorter. The reason is that in $L^2$ based Sobolev spaces Mikhlin's multiplier theorem is replaced by the Plancherel theorem, which implies that any bounded measurable function is a Fourier multiplier from $L^2(\R^n)$ to $L^2(\R^n)$ with norm equal to the sup-norm of the function itself. Therefore, conditions \eqref{def: Mikhlin condition} and \eqref{def: approximation condition} are enough with $m_{R,\alpha}\in L^\infty(\R^n)$ and $\beta=0$, and it is not necessary to check the multiplier estimates for higher derivatives. 
\end{remark}

\begin{remark}
Although we do not pursue sharp constants, one can optimize the choice of regularization parameter slightly by minimizing the function $F(R)=R^{|\alpha|-s}E+R^{|\alpha|}\delta$. The minimum is found at
\[
R^*=
\left(\frac{s-|\alpha|}{|\alpha|}
\right)^{\frac{1}{s}}
\left(
\frac{E}{\delta}
\right)^{\frac{1}{s}}, \quad \text{if }|\alpha|>0,
\]
which is \eqref{eq: regularization parameter} up to a constant factor.
\end{remark}

\begin{remark}
Theorem~\ref{thm:stability} gives an \emph{a priori} way of choosing the regularization parameter $R=(E/\delta)^{1/s}$ provided that the quantities $E,\delta,$ and $s$ are known. In practice, one does not necessarily know all the values $E$ and $s$, or the noise level $\delta$, and the choice of the regularization parameter is done \emph{a posteriori} experimentally or based on heuristic or adaptive strategies such as the L-curve method or Morozov's discrepancy principle~\cite{hansen2010discreteinversebook,mueller2012linearandnonlinearinversebook}. 
\end{remark}


\section{Optimality of the stability estimate}\label{sec: Optimality}

The stability estimate of Theorem~\ref{thm:stability} is optimal, as it can be proven that no operator $\mathcal A: L^p(\R^n)\to L^p(\R^n)$ with the consistency condition $\mathcal{A}(0)=0$ can achieve better stability than that of Theorem~\ref{thm:stability}. This follows from
\begin{theorem}\label{thm: Optimality}
    Let $1<p<\infty$, $s>0$, and $\alpha\in\mathbb{N}^n$ be a multi-index such that $0 \leq |\alpha| < s$. 
    Then there exists a constant $c_0>0$ depending only on $s,\alpha,p,$ and $n$, such that the following holds.
    Given $E>0$ and $0<\delta<E$ there exists $f\in H^{s,p}(\R^n)$ satisfying $\Vert f \Vert_{H^{s,p}}\leq E$ and $\Vert f \Vert_{L^p}<\delta$ for which
    \[
    \Vert \partial^\alpha f \Vert_{L^p} \geq c_0 E^{|\alpha|/s}\delta^{1-|\alpha|/s}.
    \]
\end{theorem}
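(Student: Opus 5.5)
We will prove Theorem~\ref{thm: Optimality} with a scaling construction. Fix once and for all a nonzero Schwartz function $\varphi\in\S(\R^n)$ with $\widehat\varphi\in C_c^\infty$ supported in the annulus $\{1/2\le|\xi|\le 2\}$. We also require $\partial^\alpha\varphi\not\equiv0$, which holds automatically: $\widehat{\partial^\alpha\varphi}=(i\xi)^\alpha\widehat\varphi$ cannot vanish identically on an open set where $\widehat\varphi\neq0$. For parameters $\lambda\ge1$ and $a>0$ set
\[
f(x)=a\,\varphi(\lambda x),\qquad \widehat f(\xi)=a\lambda^{-n}\widehat\varphi(\xi/\lambda).
\]
Then $\widehat f$ is supported where $\lambda/2\le|\xi|\le2\lambda$. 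Since $\partial^\alpha f(x)=a\lambda^{|\alpha|}(\partial^\alpha\varphi)(\lambda x)$, we get
\[
\|f\|_{L^p}=a\lambda^{-n/p}\|\varphi\|_{L^p},\qquad \|\partial^\alpha f\|_{L^p}=a\lambda^{|\alpha|-n/p}\|\partial^\alpha\varphi\|_{L^p}.
\]

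The key step is the Bernstein-type bound $\|f\|_{H^{s,p}}\le C a\lambda^{s-n/p}$ for $\lambda\ge1$, with $C$ independent of $\lambda$. To prove it, write $(1-\Delta)^{s/2}f=a\,(\psi_\lambda\ast\varphi_\lambda)$-type expressions via the multiplier $(1+|\xi|^2)^{s/2}\chi(\xi/\lambda)$, where $\chi\in C_c^\infty$ equals $1$ on the annulus $\{1/2\le|\xi|\le2\}$ and is supported in $\{1/4\le|\xi|\le4\}$. After rescaling $\xi=\lambda\eta$, this equals $\lambda^s$ times the multiplier $\mu_\lambda(\eta)=(\lambda^{-2}+|\eta|^2)^{s/2}\chi(\eta)$. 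The functions $\mu_\lambda$ for $\lambda\ge1$ are smooth and supported in the fixed annulus, with all derivatives bounded uniformly in $\lambda$ since $|\eta|\ge1/4$ there. Hence $\mathcal F^{-1}\mu_\lambda$ is bounded in $L^1$ uniformly in $\lambda$; this also follows from the Mikhlin theorem, but the $L^1$ kernel bound is enough. Dilation invariance of $L^p$ norms then gives
\[
\|(1-\Delta)^{s/2}f\|_{L^p}\le C\lambda^s\|f\|_{L^p}=C a\lambda^{s-n/p}.
\]
This uniformity in $\lambda$ is the main technical point, and it is the reason for restricting to $\lambda\ge1$.

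Now choose the parameters. Define $a\lambda^{-n/p}=\delta/(2\|\varphi\|_{L^p})$, so that $\|f\|_{L^p}=\delta/2<\delta$. We need $\|f\|_{H^{s,p}}\le C\lambda^s\delta/(2\|\varphi\|_p)\le E$, so take
\[
\lambda=\kappa\,(E/\delta)^{1/s}\quad\text{with}\quad \kappa=\bigl(2\|\varphi\|_p/C\bigr)^{1/s}.
\]
The condition $\lambda\ge1$ could fail when $E/\delta$ is close to $1$ and $\kappa<1$. To handle this, shrink $\kappa$ and enlarge $C$ so that we may assume $C\ge 2\|\varphi\|_p$; then we instead take $\lambda=\max\{1,\kappa(E/\delta)^{1/s}\}$ and, if needed, reduce $a$. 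The simplest route is a case split:
\begin{enumerate}[(i)]
\item If $\kappa(E/\delta)^{1/s}\ge1$, use the $\lambda$ above.
\item If $\kappa(E/\delta)^{1/s}<1$, then $E/\delta<\kappa^{-s}$ is bounded. Take $\lambda=1$ and $a=\min\{\delta,E\}/(2C')$, where $C'$ bounds both $\|\varphi\|_p$ and $\|\varphi\|_{H^{s,p}}$. Then $\|\partial^\alpha f\|_p\gtrsim\delta\gtrsim E^{|\alpha|/s}\delta^{1-|\alpha|/s}$, because $E\le\kappa^{-s}\delta$.
\end{enumerate}

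In case (i) we compute
\[
\|\partial^\alpha f\|_{L^p}=\lambda^{|\alpha|}\frac{\delta}{2}\frac{\|\partial^\alpha\varphi\|_p}{\|\varphi\|_p}=c_0E^{|\alpha|/s}\delta^{1-|\alpha|/s},
\]
with $c_0$ depending only on $s,\alpha,p,n$ through $\varphi$, $\kappa$ and $C$. This proves the theorem. The optimality statement for general $\mathcal A$ then follows in the usual way by comparing $f$ with $-f$ (or with $0$), both of which are consistent with the data $f_\delta=0$.
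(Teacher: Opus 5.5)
Your proof is correct. It uses the same scaling construction $f=a\varphi(\lambda x)$ as the paper, but obtains the key estimate $\|f\|_{H^{s,p}}\lesssim a\lambda^{s-n/p}$ by a different route.

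The paper takes an arbitrary bump $\varphi\in C_c^\infty(\R^n)$ and imports Triebel's dilation estimate for $H^{s,p}=F^s_{p,2}$ (Proposition~\ref{prop: dilation}), a black box valid on all of $H^{s,p}$. You instead choose $\widehat\varphi$ supported in an annulus. The estimate then collapses to a Bernstein inequality $\|f\|_{H^{s,p}}\le C_B\lambda^s\|f\|_{L^p}$, which you prove by hand from uniform derivative bounds on $\mu_\lambda$ and Young's inequality. This makes your argument self-contained, and since only Young's inequality is used, it goes through verbatim for $p=1$ and $p=\infty$. The cost is that it only treats frequency-localized profiles, which is all the theorem needs.

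The second difference is where the constant slack goes. The paper keeps $\lambda=(E/\delta)^{1/s}$, which is automatically $\ge 1$ because $\delta<E$, and shrinks the amplitude $a$. You fix $\|f\|_{L^p}=\delta/2$ and shrink the frequency to $\kappa(E/\delta)^{1/s}$, which is exactly what forces your case (ii). Your case split is correct, but you could drop it by taking $\lambda=(E/\delta)^{1/s}$ and $\|f\|_{L^p}=\theta\delta$ with $\theta=\min\{1/2,1/C_B\}$. This gives $\|f\|_{H^{s,p}}\le C_B\theta E\le E$ and $\|\partial^\alpha f\|_{L^p}=\theta\,\frac{\|\partial^\alpha\varphi\|_{L^p}}{\|\varphi\|_{L^p}}E^{|\alpha|/s}\delta^{1-|\alpha|/s}$.

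Finally, your closing remark about comparing $f$ with $-f$ slightly improves the paper's deduction of the minimax bound. By the triangle inequality it gives the lower bound for every $\mathcal A$, without the consistency assumption $\mathcal A(0)=0$.
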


Before proving this theorem, let us show how Theorem~\ref{thm: Optimality} implies the optimality of the rates of Theorem~\ref{thm:stability}. 
We begin by selecting an arbitrary operator $\mathcal A: L^p(\R^n)\to L^p(\R^n)$ with $\mathcal{A}(0)=0$. Given $0<\delta<E$, it suffices to find $f\in H^{s,p}(\R^n)$ and $f_\delta\in L^p(\R^n)$, so that $\Vert f \Vert_{H^{s,p}}\leq E$ and $\Vert f-f_\delta\Vert_{L^p}<\delta$, yet
\[
    \Vert \partial^\alpha f-\mathcal A (f_\delta) \Vert_{L^p} \geq c_0 E^{|\alpha|/s}\delta^{1-|\alpha|/s}.
\]
Now, assuming Theorem~\ref{thm: Optimality}, let $f$ be the function provided by that theorem, and let $f_\delta\equiv0$. Then 
\[
\partial^\alpha f - \mathcal A (f_\delta) = \partial^\alpha f - \mathcal A (0) = \partial^\alpha f,
\]
and Theorem~\ref{thm: Optimality} gives the claimed bounds. Consequently, we have
\begin{equation*}
\inf_{\substack{\mathcal A :L^p\to L^p\\\mathcal{A}(0)=0}}
\sup_{\substack{
    \|f\|_{H^{s,p}} \le E \\
    \|f - f_\delta\|_{L^p} \le \delta
}}
\| \partial^\alpha f - \mathcal A(f_\delta) \|_{L^p}
 \asymp E^{|\alpha|/s} \delta^{1-|\alpha|/s}
\end{equation*}
in terms of the noise level $\delta$ and the \emph{a priori} bound $E$.
Therefore, the convergence rates of Theorem~\ref{thm:stability} are minimax optimal: The operator $D^\alpha_R$ satisfies this bound, and no operator $\mathcal A:L^p(\R^n)\to L^p(\R^n)$ with $\mathcal{A}(0)=0$ can achieve better uniform bounds. 

The proof of Theorem~\ref{thm: Optimality} requires the following estimate for dilation operators on Sobolev spaces.
\begin{proposition}\label{prop: dilation}
Let $\lambda\geq 1$ and $1< p<\infty$. Then the dilation operator 
\[
\sigma_\lambda: H^{s,p}(\R^n)\to H^{s,p}(\R^n),\quad
(\sigma_\lambda f)(x) = f(\lambda x),
\]
is a bounded linear operator with
\[
\Vert \sigma_\lambda f \Vert_{H^{s,p}}\leq C(s,p,n) \lambda^{s-n/p}\Vert f \Vert_{H^{s,p}},\quad \forall f\in H^{s,p}(\R^n).
\]
\end{proposition}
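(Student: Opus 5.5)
We work on the Fourier side. Since $\widehat{\sigma_\lambda f}(\xi)=\lambda^{-n}\widehat f(\xi/\lambda)$, a change of variables gives
\[
(1-\Delta)^{s/2}\sigma_\lambda f = \sigma_\lambda\big( T_\lambda g\big),\qquad g:=(1-\Delta)^{s/2}f\in L^p(\R^n),
\]
where $T_\lambda$ is the Fourier multiplier operator with symbol
\[
\mu_\lambda(\eta)=\frac{(1+\lambda^2|\eta|^2)^{s/2}}{(1+|\eta|^2)^{s/2}} .
\]
Indeed, evaluating $(1+|\xi|^2)^{s/2}\lambda^{-n}\widehat f(\xi/\lambda)$ at $\xi=\lambda\eta$ produces $(1+\lambda^2|\eta|^2)^{s/2}\widehat f(\eta)=\mu_\lambda(\eta)\widehat g(\eta)$, after which we undo the dilation. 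Since $\Vert\sigma_\lambda h\Vert_{L^p}=\lambda^{-n/p}\Vert h\Vert_{L^p}$, the claim reduces to the uniform multiplier bound
\[
\Vert T_\lambda\Vert_{L^p\to L^p}\le C(s,p,n)\,\lambda^{s},\qquad \lambda\ge1.
\]
This holds for every $s$ if $\mu_\lambda$ is the sum of a symbol satisfying the Mikhlin bound $|\partial^\beta\mu(\eta)|\le C\lambda^s|\eta|^{-|\beta|}$, $|\beta|\le\floor{n/2}+1$, and, when $s<0$, a part handled separately.

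The case $s\ge0$ is the main one. Write $\mu_\lambda(\eta)=\lambda^s\,\nu_\lambda(\eta)$ with
\[
\nu_\lambda(\eta)=\Big(\frac{\lambda^{-2}+|\eta|^2}{1+|\eta|^2}\Big)^{s/2},
\]
a smooth function of $|\eta|^2$ with $0<\nu_\lambda\le1$. To get Mikhlin bounds uniform in $\lambda$, set $\varepsilon=\lambda^{-2}\in(0,1]$ and write $\nu=\phi_\varepsilon(|\eta|^2)^{s/2}$ with $\phi_\varepsilon(t)=(\varepsilon+t)/(1+t)$. Each factor $(\varepsilon+t)^{s/2}$ and $(1+t)^{-s/2}$ satisfies $|t^k\partial_t^k(\cdot)|\le C_k(\cdot)$ uniformly in $\varepsilon$, because $t/(\varepsilon+t)\le1$ and $t/(1+t)\le1$. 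Converting back through the chain rule with $t=|\eta|^2$ gives $|\partial^\beta\nu_\lambda(\eta)|\le C_\beta\,\nu_\lambda(\eta)\,|\eta|^{-|\beta|}\le C_\beta|\eta|^{-|\beta|}$. This uniform-in-$\lambda$ verification of the Mikhlin condition near $|\eta|\sim\lambda^{-1}$, the transition region, is the main technical point. Mikhlin's theorem (as used in Lemma~\ref{lemma: Mikhlin multipliers}) then gives $\Vert T_\lambda\Vert\le C\lambda^s$.

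For $s<0$, the same computation gives $|\partial^\beta\nu_\lambda|\le C\nu_\lambda|\eta|^{-|\beta|}$, but now $\nu_\lambda$ can be as large as $\lambda^{-s}$ near the origin. So $\mu_\lambda$ itself is bounded by $1$ with Mikhlin bounds $\le C$, which gives $\Vert T_\lambda\Vert\le C\le C\lambda^{s}$ only if $s\ge0$. We therefore expect the proposition as stated to be intended for $s\ge0$, which is the only case needed in Theorem~\ref{thm: Optimality}. We would state and prove it for $s\ge0$ and note that for $s<0$ the argument yields the bound with $\lambda^{s}$ replaced by $1$.

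Finally, we combine the pieces:
\[
\Vert\sigma_\lambda f\Vert_{H^{s,p}}=\lambda^{-n/p}\Vert T_\lambda g\Vert_{L^p}\le C\lambda^{s-n/p}\Vert g\Vert_{L^p}=C\lambda^{s-n/p}\Vert f\Vert_{H^{s,p}},
\]
and linearity of $\sigma_\lambda$ is immediate.
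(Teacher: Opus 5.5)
Your proof is correct for $s\ge 0$, and it takes a genuinely different route from the paper. The paper gives no argument of its own: it cites Triebel's dilation estimate for the Triebel--Lizorkin spaces $F^s_{p,q}(\R^n)$ and transfers it through the identification $H^{s,p}(\R^n)=F^s_{p,2}(\R^n)$.

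You instead move $(1-\Delta)^{s/2}$ past the dilation, $(1-\Delta)^{s/2}\sigma_\lambda=\sigma_\lambda T_\lambda(1-\Delta)^{s/2}$, and reduce everything to a Mikhlin bound for $\nu_\lambda=\lambda^{-s}\mu_\lambda$ that is uniform in $\lambda$. The estimate $|t^k\partial_t^k(\varepsilon+t)^{a}|\le C_k(\varepsilon+t)^{a}$, uniform in $\varepsilon\in(0,1]$, followed by the chain rule in $t=|\eta|^2$, delivers exactly that.

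The cited route is far more general, since it covers the whole $F$-scale, including quasi-Banach cases. However, it rests on Littlewood--Paley theory twice: once in Triebel's proof and once in the identification $H^{s,p}=F^s_{p,2}$. Your route is self-contained and uses only the Mikhlin theorem already employed in Lemma~\ref{lemma: Mikhlin multipliers}. It also shows precisely where $s\ge 0$ enters: the bound $\nu_\lambda\le 1$ needs $\lambda^{-2}\le 1$ raised to a nonnegative power.

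Your suspicion about $s<0$ is justified, and you can state it more firmly: there the proposition is false, not merely unproved. Take $\phi\in\S(\R^n)$, $\phi\neq 0$, with $\widehat\phi\in C_c^\infty$ supported in the unit ball, and set $f=\phi(\cdot/\lambda)$. Then $\sigma_\lambda f=\phi$ has a fixed $H^{s,p}$ norm. Meanwhile $\widehat f$ is supported in $|\xi|\le\lambda^{-1}\le 1$, where $(1+|\xi|^2)^{s/2}$ coincides with a $C_c^\infty$ multiplier, so
$\Vert f\Vert_{H^{s,p}}\le C\Vert f\Vert_{L^p}=C\lambda^{n/p}\Vert\phi\Vert_{L^p}$.
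Hence $\Vert\sigma_\lambda f\Vert_{H^{s,p}}/\Vert f\Vert_{H^{s,p}}\gtrsim\lambda^{-n/p}$, which contradicts the claimed bound $C\lambda^{s-n/p}$ as $\lambda\to\infty$ when $s<0$. So your bound $C\lambda^{-n/p}$ is the correct one in that range. The restriction costs nothing downstream, since Theorem~\ref{thm: Optimality} only uses $s>0$.

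One cleanup is needed: delete the early sentence saying that for $s<0$ the bound follows by splitting off ``a part handled separately,'' because it contradicts your own later, correct conclusion.
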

This result is proved by Triebel in \cite[Ch.~3.4.1, Proposition~2\,(ii)]{Triebel1} for the Triebel-Lizorkin spaces $F^s_{p,q}(\R^n)$, and follows for Bessel potential spaces $H^{s,p}(\R^n)$ from the identification $H^{s,p}(\R^n)=F^s_{p,2}(\R^n)$, when $1<p<\infty$.  We refer to \cite{hovemann2025,Schneider} for more discussion on dilation operators in function spaces.

\begin{proof}[Proof of Theorem~\ref{thm: Optimality}]
We will construct a constant $c_0>0$ independent of $\delta$ and $E$, and a function $f$ satisfying the conditions $\Vert f \Vert_{L^p}<\delta$, $\Vert f \Vert_{H^{s,p}}\leq E$, and $\Vert \partial^\alpha f\Vert_{L^p}\geq c_0 E^{|\alpha|/s}\delta^{1-|\alpha|/s}$. For this, let $\varphi \in C^{\infty}_c(\R^n)$ be a nonzero test function with $\Vert\varphi\Vert_{L^p} = 1$ and $\partial^\alpha\varphi \not\equiv 0$, and let $f(x) = a \varphi(\lambda x)$ for some $a > 0, \lambda \geq 1$ to be fixed later. Now a direct computation gives
\begin{equation}\label{eq: Lp norm of f}
    \Vert f \Vert_{L^p} 
    = \Vert a \varphi(\lambda x) \Vert_{L^p}
    = a \lambda^{-n/p} \Vert \varphi \Vert_{L^p}    
\end{equation}
and similarly
\begin{equation}\label{eq: partial derivative of f}
    \Vert \partial^\alpha f \Vert_{L^p}
    = 
    a \lambda^{|\alpha|-n/p} \Vert\partial^\alpha \varphi\Vert_{L^p}.    
\end{equation}
Finally, the norm of $f$ in $H^{s,p}(\R^n)$ is estimated by Proposition~\ref{prop: dilation} using the dilation operator $\sigma_\lambda$ as
\begin{equation}\label{eq: norm of f in Hsp}
\Vert f \Vert_{H^{s,p}}
=
a \Vert \sigma_\lambda \varphi \Vert_{H^{s,p}}
\leq
C_0 a \lambda^{s-n/p} \Vert \varphi \Vert_{H^{s,p}}.
\end{equation}

Next, let us select the parameters
\begin{equation}\label{eq: a and lambda}
\lambda = \left( \frac{E}{\delta} \right)^{1/s} \geq 1 \quad\text{and}\quad
 a = \frac12 \min\{ \frac{1}{C_0\Vert \varphi \Vert_{H^{s,p}}}, 1 \} E^\frac{n}{sp} \delta^{1-\frac{n}{sp}}.   
\end{equation}
Substituting these choices into \eqref{eq: Lp norm of f} and \eqref{eq: norm of f in Hsp} gives
\[
\Vert f \Vert_{L^p}= a\lambda^{-n/p} \leq \frac12 E^\frac{n}{sp} \delta^{1-\frac{n}{sp}} E^{-\frac{n}{ps}} \delta^{\frac{n}{ps}} = \frac{\delta}{2}
\]
and 
\[
\Vert f \Vert_{H^{s,p}}
\leq C_0 a\lambda^{s-n/p} \Vert \varphi \Vert_{H^{s,p}}
\leq 
\frac12 
E^\frac{n}{sp} \delta^{1-\frac{n}{sp}} E^{1-\frac{n}{ps}} \delta^{(\frac{n}{p}-s)/s} = \frac{E}{2}.
\]
Therefore, $\Vert f \Vert_{L^p}<\delta$ and $\Vert f \Vert_{H^{s,p}}\leq E$.
Finally, a direct substitution of the parameters \eqref{eq: a and lambda} into \eqref{eq: partial derivative of f} yields
\[
\Vert \partial^\alpha f\Vert_{L^p} 
\geq c_0 E^\frac{|\alpha|}{s}\delta^{1-\frac{|\alpha|}{s}}
\]
with $c_0 = \frac14 \min\{ \frac{1}{C_0\Vert \varphi \Vert_{H^{s,p}}}, 1 \}\Vert \partial^\alpha\varphi \Vert_{L^p}$, a constant independent of $\delta>0$, and $E>0$.
\end{proof}

\section{Examples of admissible multipliers}\label{sec: Examples}
\noindent\textbf{Truncation:} The multiplier 
\[
m_{R}(\xi)
=
\begin{cases}
1,& \text{if } |\xi|<R,\\
0,& \text{otherwise},
\end{cases}
\]
is an admissible multiplier in $H^{s,2}(\R^n)$, because in the Hilbert case $p=2$ admissibility of Definition~\ref{def: admissible multiplier}~(\ref{def: Hilbert case}) only requires that $m_{R}\in L^\infty(\R^n)$ and the conditions \eqref{def: Mikhlin condition} and \eqref{def: approximation condition} hold for $\beta=0$. If $p\neq 2$ the admissibility fails even in the one-dimensional case due to lack of smoothness required. While in one dimension this does not prevent $m_{R}$ from being an $L^p$ multiplier, in higher dimensions Fefferman's ball multiplier theorem~\cite{Fefferman} shows that $m_{R}$ is not an $L^p$-multiplier for any $1<p<\infty$, when $p\neq 2$ and $n\geq 2$. The Hilbert case is therefore special because discontinuous spectral multipliers can remain admissible.

\smallskip
\noindent\textbf{Smooth cutoff:} Let us next show that there exists admissible multipliers in $H^{s,p}(\R^n)$ for $1<p<\infty$. Let $\chi\in C_c^\infty(\R^n)$ be such that $0\leq \chi(\xi) \leq 1$ for all $\xi\in\R^n$, $\chi(\xi)=1$, when $|\xi|\leq 1$ and $\chi(\xi)=0$ when $|\xi|>2$, and define
\begin{equation}\label{eq: smooth cutoff}
m_{R}(\xi)=\chi(\xi/R).    
\end{equation}
This is an example of a smooth admissible multiplier of order $\alpha$ in any $H^{s,p}(\R^n)$, when $s>0$ and $0\leq |\alpha|<s$. Let us verify the conditions of Definition~\ref{def: admissible multiplier}~(\ref{def: Banach case}). 

In the region $|\xi|<R$ one has $m_R(\xi)\equiv 1$, so the condition \eqref{def: approximation condition} is trivially true. Condition \eqref{def: Mikhlin condition} reads $|\xi|^{|\alpha|} \partial^{\beta} 1 \leq CR^{|\alpha|}|\xi|^{-|\beta|}$, which is certainly true, because  if $|\beta|=0$, then one uses $|\xi|<R$ so that $|\xi|^{|\alpha|}\leq R^{|\alpha|}$, and for $|\beta|>0$ the left-hand side vanishes identically.

If $|\xi|>2R$ then $m_R(\xi)\equiv 0$ and the condition \eqref{def: Mikhlin condition} is trivial. The condition \eqref{def: approximation condition} is also true, as now $1-m_R\equiv 1=\min\{1,(|\xi|/R)^{s-|\alpha|}\}$, and derivatives of $1-m_R\equiv 1$ vanish identically.

It remains to check $R\leq |\xi| \leq 2R$. Observe that the functions $(\partial^{\beta}\chi)\in C_c^\infty(\R^n)$ are bounded by uniform constants $\Vert \partial^{\beta}\chi\Vert_{L^\infty}$. Using
\[
R^{|\alpha|}\leq |\xi|^{|\alpha|}\leq 2^{|\alpha|} R^{|\alpha|}
\quad\text{and}\quad
2^{-|\beta|} R^{-|\beta|}\leq |\xi|^{-|\beta|}\leq R^{-|\beta|}
\]
we get by the chain rule applied to the composite function $\chi(\xi/R)$ that
\begin{align*}
|\xi|^{|\alpha|} |\partial^{\beta} m_R(\xi)|
&=
|\xi|^{|\alpha|} |\partial^{\beta}\big(\chi(\xi/R)\big)|
=
|\xi|^{|\alpha|} R^{-|\beta|} |(\partial^{\beta}\chi)(\xi/R)|\\
&\leq 
2^{|\alpha|+|\beta|}\Vert \partial^{\beta}\chi\Vert_{L^\infty} R^{|\alpha|} |\xi|^{-|\beta|}   
\end{align*}
verifying \eqref{def: Mikhlin condition}. Similarly, if $|\beta|\geq 1$
\[
|\partial^{\beta} (1-m_R(\xi))|
=
R^{-|\beta|} |(\partial^{\beta}\chi)(\xi/R)|
\leq 2^{|\beta|}\Vert \partial^{\beta}\chi\Vert_{L^\infty} |\xi|^{-|\beta|}
\]
and if $|\beta|=0$
\[
|1-m_R(\xi)|
\leq 1 = \min\{1,(|\xi|/R)^{s-|\alpha|}\}
\]
verifying \eqref{def: approximation condition}. 

While the non-smooth truncation above fails to produce Fourier multipliers in higher dimensions when $p\neq 2$, the smooth cutoff is a Schwartz function, hence an $L^p$-multiplier. This construction does not require $\chi$ to be radial. More similar multipliers can be obtained as tensor products of lower dimensional smooth cutoff functions.

\smallskip
\noindent\textbf{Gaussian:} The multiplier
\[
m_{R,\alpha}(\xi)=e^{-(|\xi|/R)^2}
\]
is admissible when $p=2$ if $\max\{0,s-2\}\leq |\alpha|<s$.
Recall first from Definition~\ref{def: admissible multiplier}~(\ref{def: Hilbert case}) and Remark~\ref{rem: Hilbert is simpler} that when $p=2$ it suffices to take $\beta=0$. Then, noting that $t^{|\alpha|}e^{-t^2}$ is bounded for $t\geq 0$ we get
\[
|\xi|^{|\alpha|}e^{-(|\xi|/R)^2} = R^{|\alpha|}\left(\frac{|\xi|}{R}\right)^{|\alpha|}e^{-(|\xi|/R)^2} \leq C R^{|\alpha|},
\]
verifying \eqref{def: Mikhlin condition}.
Using the fact that $1-e^{-x^2}\leq x^2$, one checks that
\[
|1-e^{-(|\xi|/R)^2}| \leq \min\{1,(|\xi|/R)^2\},
\]
which satisfies the condition \eqref{def: approximation condition} if $0\leq s-|\alpha|\leq 2$. Whether this condition can be circumvented will not be pursued here.

\smallskip
\noindent\textbf{Tikhonov:} Let $n=1$, $p=2$, $\sigma>0$, and $k$ be a non-negative integer $<\sigma$. Denote the regularization parameter by $\lambda=R^{-2(\sigma+k)}$. Consider the  generalized Tikhonov minimization problem
\[
f_{\lambda,\delta}=\operatorname{arg\,min}_{f}\{
\Vert f - f_\delta\Vert_{L^2}^2 + \lambda \Vert f^{(k)} \Vert_{\dot H^{\sigma,2}}^2
\},
\]
where the homogeneous Sobolev spaces are defined as
\[
\dot H^{\sigma,2}(\R) \vcentcolon=\{ f \in L^2(\R) \mid |\xi|^\sigma\widehat f\in L^2(\R)\}
\]
with the norm $\Vert f \Vert_{\dot H^{\sigma,2}}\vcentcolon=\Vert |\xi|^\sigma \widehat f\Vert_{L^2}$.
This model with a penalty term in $H^{\sigma,2}(\R)$, instead of the homogeneous space $\dot H^{\sigma,2}(\R)$, was studied in \cite{yang_generalized_2014}. 
Due to Parseval's equality, the above minimization problem is equivalent to
\begin{align*}
\widehat f_{\lambda,\delta}
=
\operatorname{arg\,min}_{\widehat f}\int_{-\infty}^\infty\left(|\widehat f - \widehat f_\delta|^2
+
\lambda |\xi|^{2k}|\xi|^{2\sigma}|\widehat f|^2\right)d\xi,
\end{align*}
which (the integral defining a strictly convex functional) admits the unique solution
\begin{equation}\label{eq: tikhonov minimizer}
\widehat f_{\lambda,\delta}(\xi) = \frac{1}{1+\lambda|\xi|^{2(\sigma+k)}}\widehat f_\delta(\xi),    
\end{equation}
as in \cite[Theorem~2.11 and eq.~(2.15)]{Kirsch} and analogously to \cite[eq.~(11)]{yang_generalized_2014}. 
Thus, regularized differentiation of $f_\delta$ can be formulated as
\[
D_R^kf_\delta(x) 
= 
f^{(k)}_{R,\delta}(x) =\mathcal{F}^{-1}\left( \frac{(i\xi)^k}{1+R^{-2(\sigma+k)}|\xi|^{2(\sigma+k)}}\widehat f_\delta\right)(x),
\]
arising from the multiplier
\[
m_{R}(\xi)\vcentcolon= \frac{1}{1+R^{-2(\sigma+k)}|\xi|^{2(\sigma+k)}}.
\]
This is an admissible multiplier of order $\alpha$ in $H^{s,2}(\R)$ provided $0\leq\alpha < s$ and $s-|\alpha|\leq 2(\sigma+k)$, see Definition~\ref{def: admissible multiplier}~(\ref{def: Hilbert case}) and Remark~\ref{rem: Hilbert is simpler}.
This example shows that Tikhonov regularization also has an interpretation in the class of admissible multipliers. The higher-dimensional case is analogous, if $k$th derivatives are replaced by the operator $(-\Delta)^{k/2}$.

Interestingly, because the Tikhonov penalty is quadratic, the associated multiplier involves powers $2(\sigma+k)$ giving a lot of extra smoothness, even though the penalty term only involves derivatives of order $\sigma+k$.
This is visible already from the minimizer \eqref{eq: tikhonov minimizer}. Interpreting the multiplier $1/(1+\lambda|\xi|^{2(\sigma+k)})$ appearing in \eqref{eq: tikhonov minimizer} as the symbol of the elliptic pseudodifferential operator $(1+\lambda(-\Delta)^{\sigma+k})^{-1}$, one sees that the minimizer $f_{\lambda,\delta}$ gains $2(\sigma+k)$ derivatives in the Sobolev scale, whenever $f_\delta\in L^2(\R)$.

\begin{remark}\label{rem: characterization of multipliers is delicate}
We may conclude that the admissibility conditions of Definition~\ref{def: admissible multiplier} provide a calculus for constructing minimax optimal regularized differentiation schemes. 
The multiplier formulation is natural for spectral regularization, but a complete characterization of minimax optimal operators remains open. In the case $p=2$, bounded translation invariant operators are precisely Fourier multiplier operators with essentially bounded symbols~\cite[Ch.~1 Th.~3.16]{stein_introToFourier_1971}. For general $L^p$ spaces the situation is more delicate and practical symbol conditions such as those of Mikhlin provide a convenient sufficient criteria for boundedness.
\end{remark}

\subsection{Numerical examples}
 \begin{figure}[ht]
    \centering
    \includegraphics[width=0.45\linewidth]{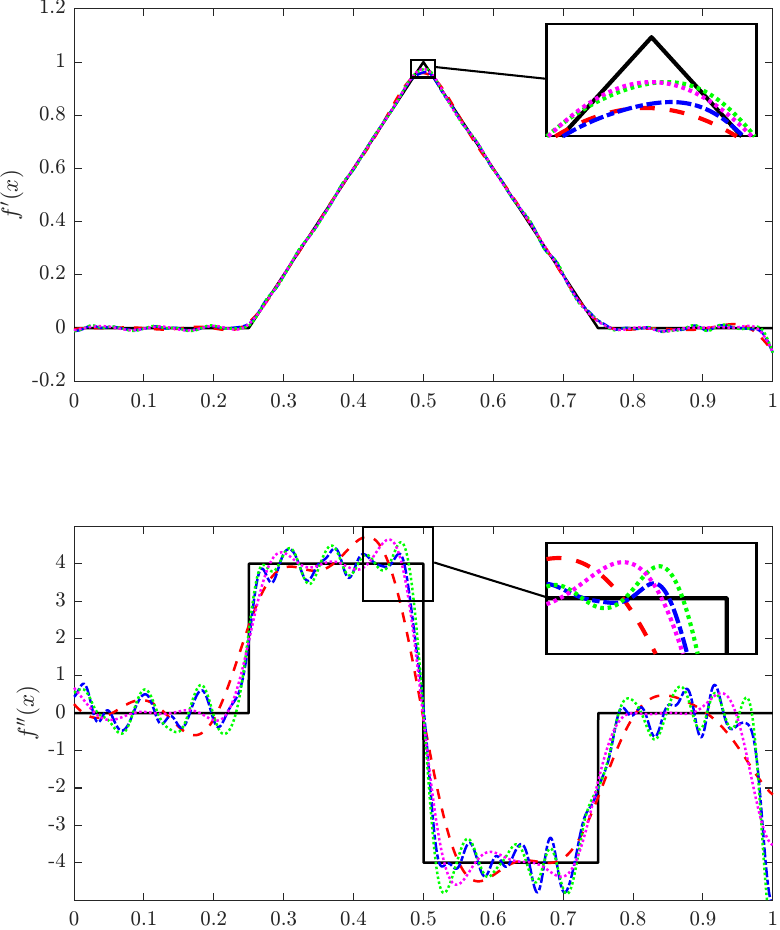}
    \hfill
    \includegraphics[width=0.45\linewidth]{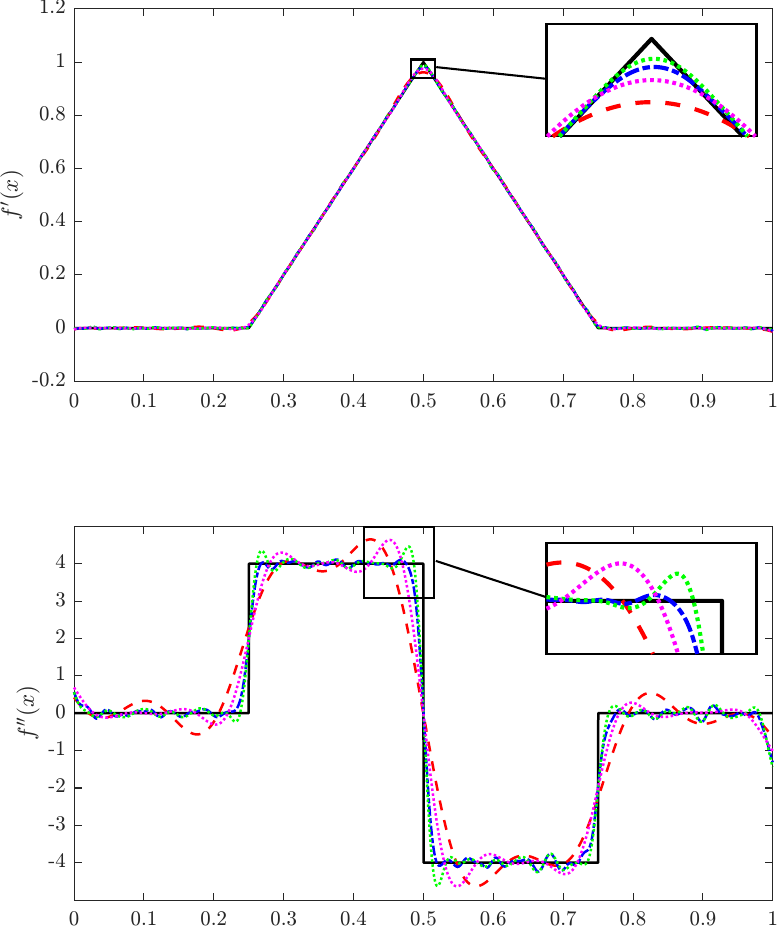}
    \caption{ 
    Comparison of regularized differentiation via truncation (red dashed), smooth cutoff (magenta dotted), Gaussian filtering (blue dot-dashed), and generalized Tikhonov (green dotted). The exact (weak) derivatives are shown in solid black. Sharp truncation causes oscillations, Gaussian smooths possible singularities, and the smooth cutoff and Tikhonov regularized solutions sit between the two.
    The examples are computed using Gaussian noise with standard deviations $\sigma=10^{-3}$ (left) and $\sigma=10^{-4}$ (right).
    }
    \label{fig:regu_diff_comparison}
\end{figure}

 \begin{figure}[ht]
    \centering
    \includegraphics[width=0.95\linewidth]{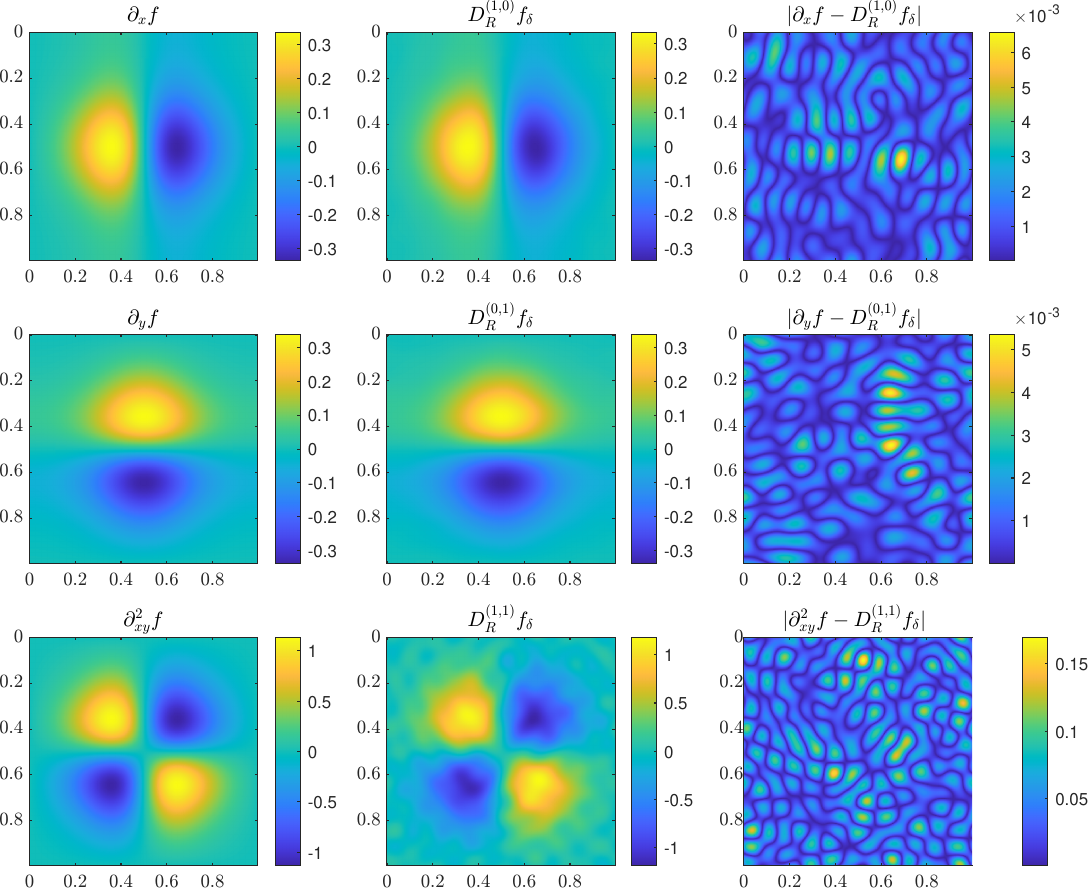}
    \caption{ 
    Spectral differentiation in 2D. Left column: True derivatives $\partial_x f(x,y)$, $\partial_y f(x,y)$, and $\partial_{xy}^2 f(x,y)$. Middle column: Results from spectral differentiation. Right column: Absolute difference $|\partial^\alpha f- D_R^\alpha f_\delta|$. Here $R=(E/\delta)^{(1/s)}\approx 7.7$ based on the numerically computed values $\delta\approx0.14$ and $E\approx 0.46$, and the choices $p=2$, $s=3$. Standard deviation of the noise is set to $\sigma=10^{-3}$.
    }
    \label{fig:2Dcase}
\end{figure}

\smallskip
Spectral differentiation can be efficiently performed via the discrete Fourier transform by employing the fast Fourier transform (FFT), using the discrete version of the formula
\[
D_R^\alpha f_\delta(x)=\F^{-1}((i\xi)^\alpha m_{R,\alpha}(\xi) \widehat f_\delta(\xi))(x).
\]
In the following examples, we use the theoretically justified choice $R= (E/\delta)^{1/s}$ with numerically computed values $E$ and $\delta$, which may be unrealistic in real-world applications.

\smallskip
\noindent\textbf{One-dimensional differentiation:}
We consider differentiation of a function $f\in H^{s,p}([0,1])$, where the Sobolev space $H^{s,p}([0,1])$ is defined through restriction as
\[
H^{s,p}([0,1]) \vcentcolon= \{ f\big|_{[0,1]} \mid f\in H^{s,p}(\R) \}.
\]
FFT requires periodic data, so for non-periodic data, we employ the smoothing spline extension proposed in \cite[Appendix]{elden_wavelet_2000} to enforce periodicity.

Let us briefly demonstrate the performance of the various multipliers. We select the function
\begin{equation}\label{eq: 1D example function}
f(x)=
\begin{cases}
    0, &  0\leq x < \frac14,\\
    2x^2 - x + \frac18, & \frac14 < x \leq \frac12,\\
    3x-2x^2-\frac78, & \frac12<x\leq\frac34,\\
    \frac14, & \frac34<x\leq 1,
\end{cases}  
\end{equation}
which is the same as in Example~5.2 of \cite{qian_fourier_2006}, and was also considered in \cite{AnttilaHarjuTyni2025}.
Now $f\in H^{s,p}([0,1])$ if and only if $s<2+1/p$. Indeed, $f'$ is a hat-function supported on $[\frac14,\frac34]$ and the weak derivative $f'' =4(\mathbf{1}_{[\frac14,\frac12]}-\mathbf{1}_{[\frac12,\frac34]})$, which shows that $f''\in H^{\varepsilon,p}([0,1])$ if and only if $\varepsilon<1/p$, see Lemma~\ref{lemma: characteristic in Sobolev}. 
This is an example of a function belonging to a genuinely fractional Sobolev space.
See also Figure~\ref{fig:regu_diff_comparison} for an illustration of the derivatives of $f$.
\begin{lemma}\label{lemma: characteristic in Sobolev}
Let $1<p<\infty$. Then the characteristic function $\mathbf{1}_{[0,1]}$ of the interval $[0,1]$ belongs to $H^{s,p}(\R)$ if and only if $s<1/p$.
\end{lemma}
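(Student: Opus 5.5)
The plan is to reduce the membership question to difference (Besov-type) characterizations of $H^{s,p}(\R)$, using the identification $H^{s,p}(\R)=F^s_{p,2}(\R)$ already invoked for Proposition~\ref{prop: dilation}, together with the standard embeddings between Triebel--Lizorkin and Besov spaces. Write $f=\mathbf 1_{[0,1]}$. The key computation is the $L^p$-modulus of continuity of $f$. For $0<|h|\le 1$ the difference $f(\cdot+h)-f$ equals $\pm1$ on two intervals of length $|h|$ and vanishes elsewhere, so
\[
\omega_p(f,t):=\sup_{|h|\le t}\Vert f(\cdot+h)-f\Vert_{L^p}= (2t)^{1/p},\qquad 0<t\le 1,
\]
while $\omega_p(f,t)\le 2$ for all $t$. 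For $0<\sigma<1$ and $1\le q\le\infty$ I use the norm
\[
\Vert f\Vert_{B^\sigma_{p,q}}\asymp \Vert f\Vert_{L^p}+\Big(\int_0^\infty \big(t^{-\sigma}\omega_p(f,t)\big)^q\,\frac{dt}{t}\Big)^{1/q}.
\]
With this formula, $f\in B^\sigma_{p,q}$ for every $q$ when $\sigma<1/p$. At $\sigma=1/p$ the integrand is constant on $(0,1]$, so $f\notin B^{1/p}_{p,q}$ for every $q<\infty$ (though $f\in B^{1/p}_{p,\infty}$).

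\emph{Sufficiency.} If $s\le 0$, then $f\in L^p\subset H^{s,p}$, because $(1+|\xi|^2)^{s/2}$ is a Mikhlin multiplier. If $0<s<1/p$, choose $\varepsilon>0$ with $s+\varepsilon<1/p$. The computation above gives $f\in B^{s+\varepsilon}_{p,\infty}$. The elementary embedding $B^{s+\varepsilon}_{p,\infty}\hookrightarrow F^{s}_{p,2}=H^{s,p}$ (losing $\varepsilon$ smoothness absorbs any change of the fine index) then yields $f\in H^{s,p}(\R)$.

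\emph{Necessity.} Since $H^{s,p}\hookrightarrow H^{1/p,p}$ for $s\ge 1/p$, it suffices to show $f\notin H^{1/p,p}(\R)$. I split into two cases.
\begin{itemize}
\item If $p\ge2$, use $F^{1/p}_{p,2}\hookrightarrow F^{1/p}_{p,p}=B^{1/p}_{p,p}$. In this case the $B^{1/p}_{p,p}$ seminorm is the Slobodeckij integral $\int\!\!\int |f(x)-f(y)|^p|x-y|^{-2}\,dx\,dy$, which diverges logarithmically near the endpoints of $[0,1]$.
\item If $1<p<2$, use $F^{1/p}_{p,2}\hookrightarrow B^{1/p}_{p,2}$ (this follows from Minkowski's inequality since $p\le 2$). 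The integral $\int_0^1 (t^{-1/p}(2t)^{1/p})^2\,dt/t$ diverges.
\end{itemize}
In both cases we get a contradiction.

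The main obstacle is this endpoint $s=1/p$. There, crude arguments such as Sobolev embedding into continuous functions, which handles $s>1/p$, are unavailable, and one must pick the correct fine index in the embedding, since $f$ does lie in $B^{1/p}_{p,\infty}$. A secondary point is justifying the difference characterization of $B^\sigma_{p,q}$ for $0<\sigma<1$, which I would cite from Triebel rather than reprove.
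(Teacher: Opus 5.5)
Your proof is correct, and it takes a different route from the paper's. The paper stays within the Slobodeckij spaces $W^{s,p}=B^s_{p,p}$, computes the Gagliardo seminorm of $\mathbf 1_{[0,1]}$ explicitly, and then changes integrability exponents. For sufficiency it uses $W^{s,p}\hookrightarrow H^{s,p}$ when $p\le 2$, and reaches $p>2$ from $p=2$ through the Sobolev embedding $H^{t,2}\hookrightarrow H^{s,p}$ with $t=s+\frac12-\frac1p$. For necessity it uses $H^{s,p}\hookrightarrow W^{s,p}$ when $p\ge 2$; for $1<p<2$ it pushes the problem to some $q>2$ via $H^{s,p}\hookrightarrow H^{t,q}$ with $t=s+\frac1q-\frac1p$.

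You instead compute the modulus of continuity once and allow an arbitrary fine index. The embeddings $B^{s+\varepsilon}_{p,\infty}\hookrightarrow F^s_{p,2}$ and $F^{1/p}_{p,2}\hookrightarrow B^{1/p}_{p,\max(p,2)}$ then settle both directions for every $p$ without changing the integrability exponent. Your $p\ge 2$ necessity step is in fact the same embedding as the paper's.

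Your version buys uniformity in $p$ and a sharp explanation of the endpoint: $\mathbf 1_{[0,1]}$ lies in $B^{1/p}_{p,\infty}$ but in no $B^{1/p}_{p,q}$ with $q<\infty$, while $H^{1/p,p}$ embeds into one of the latter. The paper's version buys lighter prerequisites. It needs only the $q=p$ Besov norm, which is an explicit double integral, plus Stein's embeddings and classical Sobolev embeddings. It avoids the difference characterization of $B^\sigma_{p,q}$ for all $q$ and the $B$--$F$ embeddings with change of fine index. The identification $H^{s,p}=F^s_{p,2}$ you rely on is already used by the paper for Proposition~\ref{prop: dilation}. Your argument is essentially the ``general embedding'' route that the appendix mentions but does not pursue.
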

\noindent The proof is postponed to Appendix~\ref{appendix}.

Since $f$ is non-periodic, we extend $f$ from $[0,1]$ to $[0,2]$ and then periodically over $\R$ using smoothing splines as in~\cite{elden_wavelet_2000}. The numerical examples are performed in \textsc{Matlab}, where the discrete version of $f_\delta$ is
\[
\mathbf{f}_\delta = \mathbf{f}+\sigma\, \mathtt{randn}(\mathtt{size}(\mathbf{f}))
\]
with $\mathbf{f}=(f(x_1),\ldots,f(x_n))$, $x_j=(j-1)/(N-1)$, $j=1,2,\ldots,N$ ($N=4097$). The command \texttt{randn}(\texttt{size}($\mathbf{f}$)) generates an array of the same size as $\mathbf{f}$ of normally distributed random numbers with $0$ mean and standard deviation~$1$. We use the standard deviations $\sigma=10^{-3}$ and $10^{-4}$ in the examples. The resulting solutions are shown in Figure~\ref{fig:regu_diff_comparison}.

\begin{table}[ht]
\centering
\begin{tabular}{|c|cc|cc|}
\hline
& \multicolumn{2}{c|}{1st derivative} & \multicolumn{2}{c|}{2nd derivative} \\
$p$ & Optimal rate & Observed rate & Optimal rate & Observed rate \\
\hline
$1.5$  & 0.625 & 0.5614 & 0.250 & 0.251 \\
$2$    & 0.600 & 0.550  & 0.200 & 0.202 \\
$10$   & 0.524 & 0.505  & 0.048 & 0.046 \\
\hline
\end{tabular}
\medskip
\caption{Convergence rates of numerical differentiation in $L^p$ spaces. The optimal rates for the example function \eqref{eq: 1D example function} are $1-|\alpha|/s$, where $s=2+1/p$, while the observed rates are computed as the slopes of a least-squares line fit to the measured error against the computed noise level $\delta$ in the corresponding $L^p$ space, see Figure~\ref{fig:errorplot}. The table demonstrates that convergence rates depend on the $L^p$ scale used.
}
\label{tab: convergence rates}
\end{table}

 \begin{figure}[ht]
    \centering
    \includegraphics[width=0.95\linewidth]{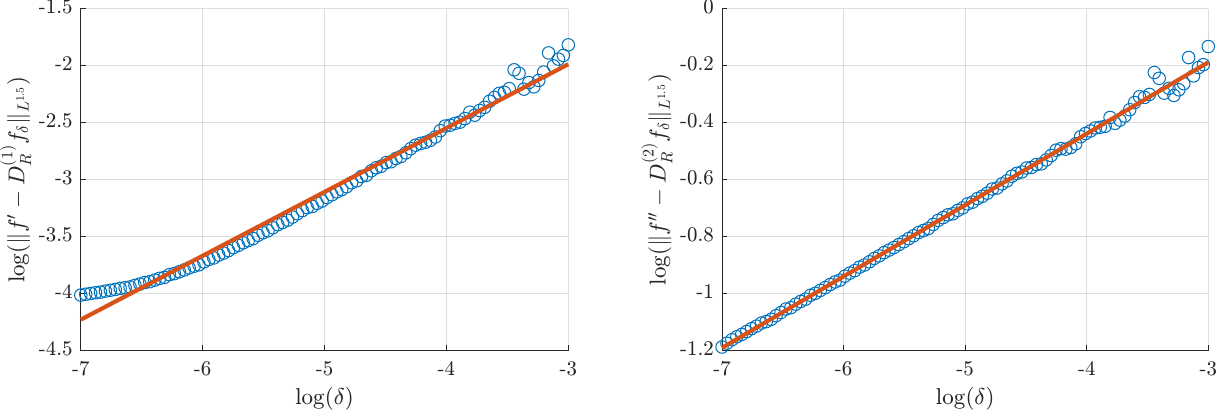}\\
    \includegraphics[width=0.95\linewidth]{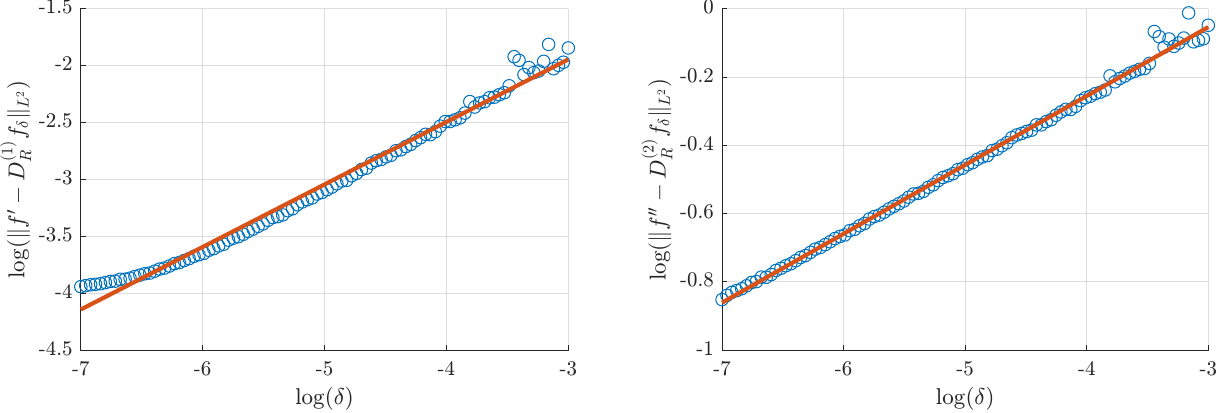}\\
    \includegraphics[width=0.95\linewidth]{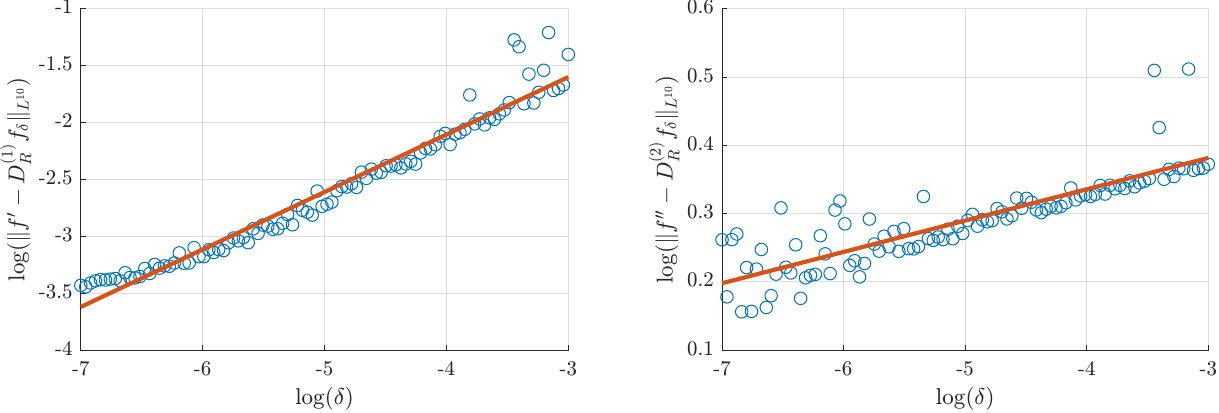}
    \caption{Convergence rates of the error with respect to the noise level in $\log\log$-scale. Here $R=R(\delta,s)$ is computed from $R=(E/\delta)^{1/s}$ with $s=2+1/p$ in three cases $p=1.5$ (top), $p=2$ (middle), $p=10$ (bottom). The multiplier is a smooth cutoff function.
    Blue circles present the measured error $\Vert f^{(k)}-D_R^{(k)}f_\delta\Vert_{L^p}$ ($k=1,2$ on the left and right, respectively) and the red line depicts the least-squares line fit to the error level taken as $\delta=\Vert f-f_\delta\Vert_{L^p}$.  
    The achieved convergence rates tabulated in Table~\ref{tab: convergence rates} are consistent with the theoretically predicted values.
     }
     \label{fig:errorplot}
\end{figure}

\smallskip
\noindent\textbf{Two-dimensional differentiation:}
Here we display spectral differentiation using a smooth cutoff multiplier \eqref{eq: smooth cutoff} in two-dimensions on the function
\[
f(x,y)=\frac{1}{10}e^{-2(\sin(\pi(x-0.5))^2 + \sin(\pi(y-0.5))^2)},\qquad (x,y)\in[0,1]^2.
\]
This function is a smooth $1$-periodic function in each variable. The implementation of two-dimensional derivatives is similar to the 1D case, and the results are shown in Figure~\ref{fig:2Dcase}.

\smallskip
\noindent\textbf{Convergence rates:}
Finally, in Figure~\ref{fig:errorplot} we test the convergence rates $\Vert f^{(k)}-D_R^{(k)}f_\delta\Vert_{L^p}\asymp E^{k/s}\delta^{1-k/s}$ for $k=1,2$ on the example function \eqref{eq: 1D example function} using the smooth cutoff function from \eqref{eq: smooth cutoff} as the multiplier, with a regularization parameter chosen as $R=(E/\delta)^{1/s}$ at various levels $\sigma\in[10^{-7},10^{-3}]$. In the manufactured example, we have $s<2+1/p$, and we include convergence tests for $p=1.5,2,$ and $10$. The measured convergence rates found in Table~\ref{tab: convergence rates} are consistent with the theoretical prediction.

\subsection*{Acknowledgements}

This work was supported by the Research Council of Finland (Flagship of Advanced Mathematics for Sensing, Imaging and Modelling grant 359186) and Emil Aaltonen foundation. I would like to thank the anonymous referees for their comments and suggestions, which have helped me to improve the manuscript.

\FloatBarrier
\appendix

\section{Proof of Lemma~\ref{lemma: characteristic in Sobolev}}\label{appendix}
Regularity properties of characteristic functions of domains have been investigated in generality in abstract Besov and Triebel–Lizorkin spaces, for example, in \cite{Sickel1,Sickel2} and the references therein. Although the result that $\mathbf{1}_{[0,1]}\in H^{s,p}(\R)$ if and only if $s<1/p$ for $1<p<\infty$ likely follows from these general results by embedding theorems, this one-dimensional result also follows by a short argument using Slobodeckij spaces and Sobolev embeddings, which we include for completeness.
\begin{proof}[Proof of Lemma~\ref{lemma: characteristic in Sobolev}]
We show first that $s<1/p$ is sufficient.
If $s\leq 0$ then $\mathbf{1}_{[0,1]}\in L^p(\R) = H^{0,p}(\R)\hookrightarrow H^{s,p}(\R)$ and there is nothing to prove, so we assume $s>0$. 
It is convenient to work with the Gagliardo seminorm defined for $0<s<1$ and $1\leq p<\infty$ by
\[
[f]_{W^{s,p}}:=\left( \int_{-\infty}^\infty\int_{-\infty}^\infty \frac{|f(x)-f(y)|^p}{|x-y|^{1+sp}}dxdy\right)^{1/p}.
\]
The Slobodeckij spaces $W^{s,p}(\R)$ are defined by finiteness of the norm
\[
\Vert f\Vert_{W^{s,p}}:= \Vert f \Vert_{L^p} + [f]_{W^{s,p}}.
\]
We show that $\mathbf{1}_{[0,1]}\in W^{s,p}(\R)$ if and only if $0<s<1/p$. For $1<p\leq 2$ the embedding $W^{s,p}(\R)\hookrightarrow H^{s,p}(\R)$ (see~\cite[Ch.~V, Th.~5(C)]{Stein}) then implies
$\mathbf{1}_{[0,1]}\in H^{s,p}(\R)$.
We compute the seminorm as
\[
[\mathbf{1}_{[0,1]}]_{W^{s,p}}^p = 2\int_0^1\int_{\R\setminus[0,1]} \frac{1}{|x-y|^{1+sp}}dxdy.
\]
Fix $y\in(0,1)$. Then the inner integral is
\begin{align*}
\int_{\R\setminus[0,1]} \frac{1}{|x-y|^{1+sp}}dx
&=
\int_{-\infty}^0 \frac{1}{|x-y|^{1+sp}}dx
+
\int_1^\infty \frac{1}{|x-y|^{1+sp}}dx
\\
&=
\frac{1}{sp}\left( y^{-sp} + (1-y)^{-sp}\right).
\end{align*}
Therefore
\begin{equation}\label{eq: Slobodeckij norm of indicator}
[\mathbf{1}_{[0,1]}]_{W^{s,p}}^p =\frac{2}{sp} \int_0^1\left( y^{-sp} + (1-y)^{-sp}\right)dy,    
\end{equation}
which converges if and only if $sp<1$. Thus $\mathbf{1}_{[0,1]}\in W^{s,p}(\R)$ if and only if $s<1/p$. Consequently, $\mathbf{1}_{[0,1]}\in H^{s,p}(\R)$ for $1<p\leq 2$ and $s<1/p$. 

Let next $2 \leq p < \infty$ and $s < 1/p$. Define
\[
t := s + \frac{1}{2} - \frac{1}{p}.
\]
Then $t < 1/2$ and since the case $p=2$ is already known, we have $\mathbf{1}_{[0,1]} \in H^{t,2}(\mathbb{R})$.
Moreover, because $t - \frac{1}{2} = s - \frac{1}{p}$ then by the Sobolev embedding
\[
H^{t,2}(\mathbb{R}) \hookrightarrow H^{s,p}(\mathbb{R})\quad\text{for }
2\leq p<\infty
\]
(see \cite[Th.~6.2.4(a)]{GrafakosModern})
we conclude that $\mathbf{1}_{[0,1]} \in H^{s,p}(\R)$ for $s<1/p$.

To show that $s<1/p$ is a necessary condition, note first that it is enough to show that $1/p\leq s<1$ is impossible, because if $s\geq 1$ then due to monotonicity of Bessel potential spaces $H^{s,p}(\R)\subset H^{s',p}(\R)$ for any $s'<1\leq s$.

Now, from \eqref{eq: Slobodeckij norm of indicator} we know that $\mathbf{1}_{[0,1]}\in W^{s,p}(\R)$ if and only if $s<1/p$. Thus the embedding $H^{s,p}(\R)\hookrightarrow W^{s,p}(\R)$ for $2\leq p<\infty$ and $0<s<1$ (see~~\cite[Ch.~V, Th.~5(A)]{Stein}) shows that $s<1/p$ is necessary, when $2\leq p<\infty$. 

Next, let $1<p<2$ and for the sake of a contradiction, assume that $\mathbf{1}_{[0,1]}\in H^{s,p}(\R)$ with $1/p\leq s<1$.
We know from earlier that $\mathbf{1}_{[0,1]}\in H^{t,q}(\R)$ for $q>2$ if and only if $t<1/q$. So, pick $q>2$ and define
\[
t:=s+\frac1q-\frac1p\geq \frac1p+\frac1q-\frac1p=\frac1q.
\]
The Sobolev embedding
\[
H^{s,p}(\R)\hookrightarrow H^{t,q}(\R)
\]
now shows that $\mathbf{1}_{[0,1]}\in H^{t,q}(\R)$, where $t\geq 1/q$, contradicting the earlier necessity of $t<1/q$ and concluding the proof.
\end{proof}

\bibliography{references} 
\bibliographystyle{abbrv}

\medskip
{\footnotesize
\noindent Teemu Tyni,
\textsc{Research unit of Applied and Computational mathematics, University of Oulu, Finland}\\
Email: \texttt{teemu.tyni@oulu.fi}
}

\end{document}